\documentclass[12pt, twoside,a4paper]{amsart}
\usepackage{amsmath,amscd,amsthm,amssymb,amsxtra,latexsym,epsfig,epic,graphics,mathdots,amssymb,mathrsfs,enumitem}
\usepackage{float}
\usepackage[T1]{fontenc}
\usepackage{makecell}
\usepackage[all]{xy}
\usepackage[utf8]{inputenc}
\usepackage{hyperref}
\hypersetup{
	colorlinks=true,
	linkcolor=blue,
	filecolor=magenta, 
	urlcolor=blue,
}
\usepackage{booktabs,longtable}

\usepackage{dsfont}
\usepackage{comment}
\usepackage[margin=1in]{geometry}

\usepackage{pgf,tikz,pgfplots} 
\pgfplotsset{compat=1.18}
\usetikzlibrary{arrows,tikzmark,quotes,cd}
\tikzset{
  commutative diagrams/.cd, 
  arrow style=tikz, 
  diagrams={>=stealth}
}\tikzcdset{
	labels={font=\everymath\expandafter{\the\everymath\displaystyle}},
}
\tikzcdset{
  cells={font=\everymath\expandafter{\the\everymath\displaystyle}},
}

\usepackage{dynkin-diagrams}

\usepackage{graphicx,color}

\newtheorem{Theorem}{Theorem}[section]

\newtheorem{Question}[Theorem]{Question}

\theoremstyle{definition}

\newcommand{\extp}{{\textstyle \bigwedge}} 

\newcommand{\CC}{{\mathbb C}}
\newcommand{\ZZ}{{\mathbb Z}}

\newcommand{\PP}{{\mathbb P}}
\newcommand{\RR}{{\mathbb R}}

\newcommand{\cO}{{\mathcal O}}

\newcommand{\into}{\hookrightarrow}
\newcommand{\onto}{\twoheadrightarrow}

\DeclareMathOperator{\codim}{{codim}}
\DeclareMathOperator{\Hom}{Hom}

\DeclareMathOperator{\sing}{Sing}

\DeclareMathOperator{\Pic}{Pic}

\DeclareMathOperator{\SO}{SO}

\DeclareMathOperator{\rk}{rk}

\newcommand{\inhom}{{\mathcal H}{\it om}}

\newcommand{\fsl}{\mathfrak{sl}}
\newcommand{\U}{\mathcal{U}}
\newcommand{\Q}{\mathcal{Q}}

\newcommand{\ox}{\mathcal{O}_X}

\newcommand{\OO}{{\mathcal{O}}}

\newcommand{\p}[1]{{\mathbb{P}^{#1}}}

\newcommand{\opn}{{\mathcal O}_{\mathbb{P}^{n}}}
\newcommand{\pn}{{\mathbb{P}^{n}}}

\newcommand{\cU}{\mathcal{U}}
\newcommand{\cQ}{\mathcal{Q}}

\newcommand{\sF}{\mathscr{F}}
\newcommand{\sG}{\mathscr{G}}
\newcommand{\sH}{\mathscr{H}}

\newcommand{\tF}{{\rm T}\sF}
\newcommand{\tG}{{\rm T}\sG}
\newcommand{\tH}{{\rm T}\sH}
\newcommand{\nF}{{\rm N}\sF}
\newcommand{\nG}{{\rm N}\sG}

\newcommand{\Fol}{\mathbb{F}{\rm ol}}

\newcommand{\tX}{{\rm T}X}
\newcommand{\omx}[1]{\Omega_{X}^{#1}}
\newcommand{\lra}{\longrightarrow}
\DeclarePairedDelimiter{\ceil}{\lceil}{\rceil}

\DeclareMathOperator{\Flag}{Flag}
\DeclareMathOperator{\Log}{Log}
\newcommand{\OOP}{\mathbb{OP}}

\newcommand{\dlra}{%
  \mathrel{%
    \tikz[baseline=-0.5ex] \draw[dashed,line width=0.8pt,->] (0,0) -- (2em,0);
  }%
}

\setlist[enumerate]{label=\roman*)} 

\makeatletter 
\def\paragraph{\@startsection{paragraph}{4}%
  \z@\z@{-\fontdimen2\font}%
  {\normalfont\bfseries}}
\makeatother

\theoremstyle{plain}
\newtheorem{theorem}{Theorem}[section]
\newtheorem{proposition}[theorem]{Proposition}
\newtheorem{corollary}[theorem]{Corollary}
\newtheorem{lemma}[theorem]{Lemma}
\theoremstyle{definition}

\newtheorem{remark}[theorem]{Remark}

\newtheorem{innerthmintro}{Theorem}
\newenvironment{thmintro}[1]
{\renewcommand\theinnerthmintro{#1}\innerthmintro}
{\endinnerthmintro}

\author[V. Benedetti]{Vladimiro Benedetti}
\address[V. Benedetti]{Universit\'e C\^ote d'Azur, CNRS, Laboratoire J.-A. Dieudonn\'e, Parc Valrose, F-06108 Nice Cedex 2, {\sc France}}
\email{vladimiro.benedetti@univ-cotedazur.fr}

\author[C. Kuster]{Crislaine Kuster}
\address[C. Kuster]{Yau Mathematical Sciences Center \\  Tsinghua University \\ Haidian District\\  Beijing \\Postcode 100084\\  China}
\email{kusterc10@tsinghua.edu.cn \\ crislainekuster@gmail.com }

\author[A. Muniz]{Alan Muniz}
\address[A. Muniz]{Departamento de Matem\'atica \\ Centro de Ci\^encias Exatas e da Natureza \\ Universidade Federal de Pernambuco \\ Recife - PE, CEP 50740-560, Brasil}
\email{alan.nmuniz@ufpe.br}

\title{Minimal-degree foliations on cominuscule Grassmannians}

\date{April 2026}

\subjclass[2020]{Primary: 37F75, 58A17,14D20,14J60. Secondary: 53D17, 14F06}

\keywords{Holomorphic foliations, Homogeneous spaces, Grassmannians, cominuscule} 

\begin{document}

\begin{abstract}
Given $X$ a cominuscule Grassmannian (or irreducible Hermitian symmetric space) and an integer $p$, we compute the minimum $l(p)$ such that $H^0(\Omega_X^p(l(p))) \neq 0$. This allows us to conclude that any codimension-one foliation of degree zero on a cominuscule Grassmannian is a pencil of hyperplanes, improving a result of the first and third authors with D. Faenzi. We also deduce the structure of codimension-one foliations of degree one. Finally, we provide families of examples of high codimensional foliations of minimal degree on classical Grassmannians, Lagrangian Grassmannians, Spinor varieties, and the Cayley plane.
\end{abstract}

\maketitle

\section{Introduction}
A singular holomorphic foliation $\sF$ on a smooth complex manifold $X$ is a rule that associates (holomorphically) to a general point of $X$ a subspace of fixed codimension $p$ of its tangent space in such a way that through each point there is a unique immersed $p$-codimensional submanifold with prescribed tangent spaces. In the language of coherent sheaves, a foliation $\sF$ is the data of a (saturated) subsheaf $\tF \subset TX$, called the tangent sheaf of $\sF$, involutive under the Lie bracket $[\tF,\tF]\subset \tF$. The quotient $\nF = TX/\tF$ is called the normal sheaf. 

Each foliation is defined by a twisted differential form $\eta \in H^0(\Omega_X^p\otimes \det(\nF))$ uniquely determined up to a holomorphic scalar factor. If $X$ is projective, then for fixed integer $p$ and a line bundle $L$, the foliations of codimension $p$ satisfying $\det(\nF) \cong L$ define a quasi-projective variety 
\[
\Fol(X,p,L) \subset \PP H^0(\Omega^p_X\otimes L).
\]
Describing the variety $\Fol(X,p,L)$ is a central problem, and it is very challenging even in the simplest cases. If $X = \pn$ is the projective space, we identify $\Pic(X) = \ZZ$ and $\det(\nF) = \opn(d+p+1)$, where $d$ is called the degree of $\sF$. In this case, we write $\Fol(\pn, p,d) := \Fol(\pn,p,\opn(d+p+1))$. A satisfactory description of $\Fol(\pn,p,d)$ is only known for $d\leq 1$ and for $(p,d) = (1,2)$, see \cite{Jouanolou, CLN:components,LPT13}. Many partial results are known in other cases; see, for instance, \cite{CLP:deg3, CM-deg2}. 

Recently, attention has been given to low-degree foliations on other projective varieties. Here, if $\Pic(X) \cong \ZZ$, the degree is defined as in the projective space; and we also write $\Fol(X,p,d) = \Fol(X, p,\ox(d+p+1))$. In \cite{BFM}, the first and third authors and D. Faenzi used representation-theoretic techniques to address $\Fol(X, 1,0)$ when $X$ is a cominuscule Grassmannian, i.e., an irreducible Hermitian symmetric space. In \cite{Kuster}, the second author addresses the same problem for adjoint manifolds. For complete intersections, see \cite{ACM:FanoDist, figueira, folCompInt}. 

The purpose of this work is to study $\Fol(X,p, d(p))$ where $X$ is a cominuscule Grassmannian, and $d(p)$ is the minimum possible degree. In this direction, we explicitly describe $H^0(\Omega_X^p(l(p)))$, where $l(p)$ is the minimum possible twist. These cohomology computations follow from general theory scattered in the literature. We believe that compiling this information here is of independent interest; see Section \ref{S: diff forms}. In \cite[Theorem D]{BCM}, the authors prove that if $X$ is a cominuscule Grassmannian, then $H^q(\Omega_X^p(l)) \neq 0$ implies $l+q \geq p\,c_1(TX)/\dim(X)$. Our results show that, for $q=0$, such a bound is far from being sharp, but it is the best one can achieve without carefully analyzing the peculiarities of each type of cominuscule Grassmannian. The case $q>0$ was later improved by Jie Liu in his thesis \cite{liu-thesis}.

Combining this cohomological knowledge with techniques from the theory of holomorphic foliations, we proved some result about the spaces of foliations on cominuscule Grassmannians. Our first main result improves \cite[Theorem B]{BFM}; see Theorem \ref{thm:deg-zero}.

\begin{thmintro}{A}\label{mthm:A}
Let $X \hookrightarrow \PP(V^\vee)$ be a minimally embedded cominuscule Grassmannian. Then every codimension-one foliation of degree zero on $X$ is given by a pencil of hyperplane sections. Hence, set-theoretically, 
\[
\Fol(X,1,0) \cong G(2,V).
\]
\end{thmintro}

In \cite{Jouanolou}, Jouanolou proved that $\Fol(\pn,1,1)$ has two irreducible components, whose general elements are either a pencil of quadrics with a double hyperplane or a pullback of a degree-one foliation on $\p2$ via a linear projection. The same holds for hypersurfaces except for the 3-dimensional quadric $Q^3$, see \cite[Theorem B]{folCompInt}. The space $\Fol(Q^3,1,1)$ has three components, two as above and one whose general member is a foliation given by the action of a noncommutative 2-dimensional Lie algebra; see \cite{LPT13}. We provide the following structure theorem for degree-one codimension-one foliations on cominuscule Grassmannians; see Theorem \ref{thm:deg-one}. 

\begin{thmintro}{B}\label{mthm:B}
Let $X$ be a cominuscule Grassmannian, except $IG(n,2n)$, $n\geq 2$. Let $\sF$ be a codimension-one foliation of degree one on $X$. Then either
\begin{enumerate}
    \item $\sF$ is logarithmic of type $(1,2)$, i.e., there exist $f^2,g\in H^0(\ox(2))$ such that $\sF$ is given by the fibers of $(f^2:g) \colon X \dashrightarrow \p1$; or
    \item $\sF = \phi^*\sG$ is the pullback, via a rational map $\phi\colon X \dashrightarrow S$, of a foliation $\sG$ on a surface $S$. Moreover, the fibers of $\phi$ are rationally connected and define a degree-zero foliation with semistable tangent sheaf. 
\end{enumerate}    
\end{thmintro}

This result does not extend to Lagrangian Grassmannians $IG(n,2n)$. For $n\geq 3$, there are projections $\pi \colon IG(n,2n) \dashrightarrow IG(2,4) = Q^3$ (see proof of Theorem \ref{thm:min-examples-orthosympl}) such that for $\sG \in \Fol(Q^3,1,1)$ transverse to $\pi$, the $\pi^*\sG$ has also degree one. Therefore, we deduce the following result.

\begin{thmintro}{C} \label{mthm:C}
For every $n\geq 2$,  $\Fol(IG(n,2n),1,1)$ has at least three irreducible components.
\end{thmintro}

In the last part, we provide examples of foliations of minimal degree on the classical Grassmannian, the Lagrangian Grassmannian, and the Spinor variety. In particular, we prove that the respective spaces of foliations are nonempty; see Theorem \ref{thm_min_rect}, Remark \ref{rem:rect-part}, and Theorem \ref{thm:min-examples-orthosympl}.

\begin{thmintro}{D}\label{mthm:D}
Let $d(p)=l(p)-1-p$ be the minimal degree, where $l(p)$ denotes the minimal possible twist for the space of $p$-forms. The space of minimal-degree foliations $\Fol(X,p,d(p))$ is nonempty in the following cases:
\begin{itemize}
    \item $X= G(k,n)$, $n\geq 2k$, $l(p)^2-4p$ is a perfect square and $l(p)=d+e$ where $d\leq n-k$, $e\leq k$ are the (integer) solutions of $x^2-l(p)x+p=0$.
    \item $X = IG(n,2n)$ or $X = OG(n,2n)$ and $2p = a(a+1)$ with $a\leq n-2$.
\end{itemize}
\end{thmintro}

Beware that $l(p)$ depends on $p$ but also on the ambient variety on which we are constructing $p$-forms (so, the value of $l(p)$ depends on whether we are in the case of ordinary, symplectic or orthogonal Grassmannians). Notice moreover that, for any variety $X$, $\Fol(X,p,d)$ may be empty even though $H^0(X,\Omega_X^p(d+p+1))\neq 0$; the most striking example is the case when $X$ is an adjoint variety, $p=1$ and $d$ is minimal, in which case $H^0(X,\Omega_X^1(1))\cong \CC\omega$. In such a situation $\omega$ is the contact 1-form on $X$ and thus it is ``highly'' non-integrable.  Finally, one interesting open question we do not address in the present paper is whether the foliations produced in our examples fill irreducible components of the respective spaces of foliations. \medskip

This work is organized as follows. In section \ref{sec:prelim} we recall and prove basic facts about cominuscule Grassmannians and foliations. In section \ref{S: diff forms} we describe the exterior powers of the cotangent bundle of a cominuscule Grassmannian and compute the minimum twist $l(p)$ to have $p$-forms. Then, in Section \ref{S:cod-one foliations}, we prove Theorem \ref{mthm:A} and Theorem \ref{mthm:B}; see Theorem \ref{thm:deg-zero} and Theorem \ref{thm:deg-one}. Theorem \ref{mthm:C} is proven in Section \Ref{S: Examples}, see Theorem \ref{thm:min-examples-orthosympl} and Theorem \ref{T: IG(n,2n) components}.   Finally, in Section \ref{S: Examples}, we present examples of minimal-degree foliations and prove Theorem \ref{mthm:D} (see Theorems \ref{thm_min_rect} and \ref{thm:min-examples-orthosympl}). We also provide an example of codimension-8 foliation on the Cayley plane $\OOP^2$, see Proposition \ref{prop:fol-Cayley}.

{
\subsection*{Acknowledgements} 
We thank Peter Belmans and Jie Liu for their helpful comments on a previous version. Part of this work was carried out during the BRIDGES meeting Gauge theory, extremal structures, and stability, held in June 2024 at the Institut d’Études Scientifiques de Cargèse (Corsica). We thank the organizers for the opportunity to meet and discuss the problems addressed in this paper. Kuster also thanks IMPA (Brasil) for their support during the initial stages of this work.
}

\section{Preliminaries}\label{sec:prelim}
\subsection{Cominuscule Grassmannians}

As already shown in \cite{BFM}, among homogeneous varieties, cominuscule varieties are particularly suited for studying foliations. Any cominuscule variety, or (from the work of E. Cartan) Hermitian symmetric variety, decomposes as a product $X = X_1\times \cdots \times X_k$ where each $X_j$ is one of the irreducible (i.e., with Picard rank equal to one) cominuscule varieties appearing in Table \ref{tab:comin}: we call these irreducible factors \emph{cominuscule Grassmannians}. To study foliations, an important feature of a cominuscule variety $X$, which by the way characterizes the cominuscule property, is that its tangent bundle $TX$ is a completely reducible homogeneous bundle (\cite[Corolary 36]{BS:Hochschild}); for irreducible ones, the tangent bundle is even an irreducible homogeneous bundle. 

In the following, we will denote by $E_\lambda$ the irreducible vector bundle on a homogeneous variety $G/P$ associated to the weight $\lambda$ (see \cite[Section 3]{BFM} for more details). For a semisimple group $G$ of rank $r$, the fundamental weights will be denoted by $\lambda_1,\dots,\lambda_r$. Furthermore, we will denote by $\cU$ (respectively $\cQ$) the tautological rank $k$ subbundle (resp. the tautological rank $n-k$ quotient bundle) on a Grassmannian $G(k,n)$; finally $\cU_X^{\perp}$ will denote the rank $n-k$ orthogonal (w.r.t. the standard quadratic form) of the tautological bundle on $X=OG(k,n)$.

\begin{table}[ht]
    \centering
    \renewcommand{\arraystretch}{1.3}
    \begin{tabular}{|c|c|m{4.5cm}|c|c|c|}\hline
      Type   & diagram & description & dimension & $c_1(\tX)$ & $\Omega_X^1$ \\ \hline
        $A_r$    & \dynkin[scale=1.2] A{*.*x*.*} & Grassmannian $G(k,r)$ & $k(r-k)$  & $r$ & $\cU\otimes \cQ^\vee$ \\ \hline
        $B_r$    & \dynkin[parabolic=1,scale=1.2] B{} & Quadric $Q^{2r-1}$ & $2r-1$ & $2r-1$ & $\cU_X\otimes \cU_X^\perp/\cU_X $\\ \hline
        $C_r$    & \dynkin[parabolic=16,scale=1.2] C{}  & Lagrangian Grassmannian $IG(r,2r)$  & $ \dfrac{r(r+1)}{2} $ & $r+1$ & $S^2\cQ_X^\vee$ \\ \hline
        $D_r$    &  \dynkin[parabolic=1,scale=1.2] D{} & Quadric $Q^{2r-2}$ & $2r-2$ & $2r-2$ & $\cU_X\otimes \cU_X^\perp/\cU_X$ \\ \hline
        $D_r$    &  \dynkin[parabolic=32,scale=1.2] D{} & Spinor variety $OG(r,2r)$ & $\dfrac{r(r-1)}{2} $ & $2r-2$ & $\bigwedge^2 \cQ^\vee_X$ \\ \hline
        $E_6$    & \dynkin[parabolic = 1,scale=1.2] E6 & The Cayley plane $\mathbb{OP}^2$ & $ 16$ & $12$ & $E_{-2\lambda_1+\lambda_3}$ \\ \hline
        $E_7$    & \dynkin[parabolic = 64,scale=1.2] E7 & The Freudenthal variety & $27$ & $18$ & $E_{-2\lambda_7+\lambda_6}$ \\ \hline 
    \end{tabular}
    \medskip
    \caption{Cominuscule Grassmannians up to isomorphism.}
    \label{tab:comin}

\end{table}

We will be concerned with the exterior powers of the cotangent bundle $\Omega_X^1$, which is irreducible when $X$ is a cominuscule Grassmannian. The following technical lemma will be very useful.

\begin{lemma}
\label{lem:compute a}
Let $E_\lambda$ be an irreducible rank-$r$ vector bundle on $X = G/P$, for $P$ maximal parabolic corresponding to the root $\alpha_k$. Let $\mu$ be a partition, and suppose that the Schur bundle $\Gamma^\mu E_\lambda$ decomposes as $\Gamma^\mu E_\lambda = \bigoplus_{i=1}^k E_{\rho_i + a_i\lambda_k}$, for some weights $\rho_i$ and integers $a_i$. Then for each factor $E_{\rho_j + a_j\lambda_k}$ the $a_j$ are computed in terms of $\rho_j$ as follows
\begin{equation}\label{eq:weights-plethysm}
    a_j = \frac{\langle |\mu|\lambda - \rho_j \,, \lambda_k \rangle}{\langle \lambda_k \,, \lambda_k \rangle}
\end{equation}
where the pairing $\langle \cdot \,, \cdot \rangle$ is the one induced by the Killing form.
\end{lemma}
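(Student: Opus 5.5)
The plan is to use the center of the Levi factor. Let $P = L\cdot U$ be the Levi decomposition, and let $L = Z(L)^\circ\cdot L^{ss}$. Since $P$ is maximal and corresponds to $\alpha_k$, the connected center $Z(L)^\circ$ is one-dimensional. Under the Killing form, its Lie algebra is spanned by the coweight dual to $\lambda_k$: an element $h_k$ with $\alpha_i(h_k)=0$ for $i\neq k$. Equivalently, the functional $\nu \mapsto \langle \nu,\lambda_k\rangle$ kills every root $\alpha_i$ with $i\neq k$, because $\lambda_k$ is orthogonal to those simple roots. Irreducible homogeneous bundles $E_\nu$ correspond to irreducible $P$-modules, on which $U$ acts trivially. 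Such a module is an irreducible $L$-module of highest weight $\nu$. By Schur's lemma, $Z(L)^\circ$ acts on it by a scalar character, namely $\nu$ restricted to the center, i.e. $\langle \nu,\lambda_k\rangle$ up to normalization. Moreover, every weight of that module differs from $\nu$ by a combination of $\alpha_i$ with $i\ne k$, so all its weights have the same pairing with $\lambda_k$.

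First I apply this to $E_\lambda$. The fiber $V_\lambda$ is an $L$-module on which $\langle\cdot,\lambda_k\rangle$ is constant, equal to $\langle\lambda,\lambda_k\rangle$. Next I consider $\Gamma^\mu V_\lambda$, which is a direct summand of $V_\lambda^{\otimes|\mu|}$. Every weight of $\Gamma^\mu V_\lambda$ is a sum of $|\mu|$ weights of $V_\lambda$, so it pairs with $\lambda_k$ to give $|\mu|\langle\lambda,\lambda_k\rangle$. In more invariant terms, the central one-parameter subgroup acts on $\Gamma^\mu V_\lambda$ by the $|\mu|$-th power of its character on $V_\lambda$. Hence each irreducible summand, of highest weight $\rho_j+a_j\lambda_k$, satisfies
\[
\langle \rho_j + a_j\lambda_k,\lambda_k\rangle = |\mu|\,\langle \lambda,\lambda_k\rangle.
\]

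Solving this identity for $a_j$ gives a formula of the shape of \eqref{eq:weights-plethysm}, and this is the main obstacle. To match the statement exactly, I read the decomposition so that $\rho_j$ is the part of the highest weight orthogonal to $\lambda_k$, i.e. a combination of the $\lambda_i$ with $i\neq k$. With that reading, the linear relation gives $a_j = \langle |\mu|\lambda,\lambda_k\rangle/\langle\lambda_k,\lambda_k\rangle$. The sign convention for $\rho_j$ in the displayed formula, namely whether the statement writes the highest weight as $-\rho_j + a_j\lambda_k$ or as $\rho_j+a_j\lambda_k$ with $\rho_j$ not orthogonal to $\lambda_k$, then has to be fixed against the paper's conventions (see \cite[Section 3]{BFM}).

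Concretely, I would expand $\rho_j=\sum_i c_i\lambda_i$ and $\lambda=\sum_i b_i\lambda_i$, and keep track of which $\lambda_i$ carries the coefficient along $\lambda_k$. Bilinearity of the Killing pairing then yields \eqref{eq:weights-plethysm} directly, so no further obstacle remains once the central-character identity is in place.
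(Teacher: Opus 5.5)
\textbf{Verdict.} Your first two paragraphs are correct and already prove the lemma; the third paragraph is wrong and should be deleted.

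\textbf{The correct part.} The one-dimensional connected centre of the Levi factor acts on the fibre of $E_\nu$ through the functional $\nu\mapsto\langle\nu,\lambda_k\rangle$. Every weight of $\Gamma^\mu$ of the fibre of $E_\lambda$ is a sum of $|\mu|$ weights of that fibre. Hence $\langle\rho_j+a_j\lambda_k,\lambda_k\rangle=|\mu|\langle\lambda,\lambda_k\rangle$, that is, $a_j\langle\lambda_k,\lambda_k\rangle=|\mu|\langle\lambda,\lambda_k\rangle-\langle\rho_j,\lambda_k\rangle$. This is \eqref{eq:weights-plethysm} verbatim.

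\textbf{Comparison with the paper.} The mechanism is the same: the paper's ``$\CC^*$-action'' is exactly this centre. The paper measures the central character through slopes, obtaining $a_i=|\mu|\,c_1(E_\lambda)/\rk E_\lambda-c_1(E_{\rho_i})/\rk E_{\rho_i}$. It then converts slopes into pairings with Ottaviani's identity $\langle\lambda_k,\lambda_k\rangle c_1(E_\gamma)=\rk E_\gamma\langle\gamma,\lambda_k\rangle$. Your weight-level argument bypasses that identity, and in effect proves it.

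\textbf{The error in the third paragraph.} There is no obstacle to resolve. You assert that a combination of the $\lambda_i$ with $i\neq k$ is orthogonal to $\lambda_k$, and this is false. The functional $\langle\cdot,\lambda_k\rangle$ kills the simple roots $\alpha_i$ with $i\neq k$, not the fundamental weights $\lambda_i$. For an irreducible root system every $\langle\lambda_i,\lambda_k\rangle$ is strictly positive; the paper itself uses $\langle\lambda_j,\lambda_n\rangle=j$ in type $C_n$.

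In the paper, $\rho_j$ is precisely such a combination: it is the highest weight for the semisimple part of $P$, for instance $\beta_\mu=\sum_i(\mu_i-\mu_{i+1})\lambda_{n-i}$. So $\langle\rho_j,\lambda_k\rangle\neq 0$ in general, and the formula $a_j=\langle|\mu|\lambda,\lambda_k\rangle/\langle\lambda_k,\lambda_k\rangle$ you derive under your ``reading'' is false. Two checks show this:
\begin{enumerate}
\item Take $\mu=(1)$ and write $\lambda=\widehat\lambda+x_k\lambda_k$ as in the paper. Your formula returns $x_k+\langle\widehat\lambda,\lambda_k\rangle/\langle\lambda_k,\lambda_k\rangle$ instead of $x_k$ whenever $\widehat\lambda\neq0$.
\item For $S^2\Q_X^\vee$ on $IG(n,2n)$ it gives $-2/n$ instead of $-2$.
\end{enumerate}

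\textbf{No convention needs fixing.} Whether $E_\nu$ is defined by the module of highest weight $\nu$ or by its dual, the central character is the same fixed nonzero multiple of $\langle\nu,\lambda_k\rangle$ for every bundle simultaneously. Also $\Gamma^\mu$ commutes with dualization. So your linear identity, solved directly, gives the statement with no further bookkeeping.
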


\begin{proof}
Since $E_\lambda$ is irreducible, it carries a (fiberwise) $\CC^*$-action of weight $c_1(E_\lambda)/\rk E_\lambda$ (meaning \emph{every} vector in each fiber has this weight), so the weight on $\det E_\lambda$ is $c_1(E_\lambda)$. Thus, each direct summand of $\Gamma^\mu E_\lambda$ must carry a $\CC^*$-action of weight $|\mu|c_1(E_\lambda)/\rk E_\lambda$. Taking the determinant of each summand yields
\[
\rk E_{\rho_i} \frac{|\mu|c_1(E_\lambda)}{\rk E_\lambda}  = c_1(E_{\rho_i + a_i\lambda_k}) = c_1(E_{\rho_i}) + a_i\rk E_{\rho_i}.
\]
Therefore, 
\[
a_i = \frac{|\mu|c_1(E_\lambda)}{\rk E_\lambda} - \frac{c_1(E_{\rho_i})}{\rk E_{\rho_i}}.
\]
To conclude the proof, by \cite[p.56]{ottaviani} (see also \cite[Lemma 2.4.1]{VladThesis}), one computes, for any weight $\gamma$,
\(
\langle \lambda_k\,, \lambda_k \rangle c_1(E_\gamma) = \rk  E_{\gamma} \langle \gamma \,, \lambda_k \rangle.
\)
Substituting into the previous equation yields the expression in the statement.
\end{proof}

To determine the $\rho_i$ that decompose $\Gamma^\mu E_\lambda$, we proceed as follows. The Dynkin diagram of the semisimple part of $P$ is obtained from the diagram of $G$ by removing the $k$-th simple root. If the expansion of $\lambda$ in terms of fundamental weights is $\lambda = \sum x_i\lambda_i$, define $\widehat{\lambda} = \sum_{i\neq k} x_i\lambda_i$, which we will regard as a weight for the modified diagram. The fiber of $E_\lambda$ at a $P$-invariant point affords the representation $V_{\widehat{\lambda}}$. Computing $\Gamma^\mu V_{\widehat{\lambda}} = \bigoplus_i V_{\rho_i}$, with respect to the modified Dynkin diagram, yields $\Gamma^\mu E_\lambda = \bigoplus_i E_{\rho_i + a_i\lambda_k}$. In general, the plethysm $\Gamma^\mu V_{\widehat{\lambda}}$ is challenging to compute. For specific cases, we use well-known formulas or the software Lie \cite{lie}.

To compute the pairings effectively, one may expand the weights in terms of the fundamental weights and use the inverse Cartan matrices. We refer to the last chapter of \cite{OniVinberg}, where this information is compiled.

\subsection{Holomorphic Foliations} \label{S: foliations}
Let $X$ be a complex projective manifold of dimension $n$. A holomorphic distribution on $X$ is given by the data of an exact sequence
\[
\sF : 0 \longrightarrow \tF \longrightarrow \tX \stackrel{\omega_\sF}{\longrightarrow} \nF \longrightarrow 0
\]
where $\tF$, called the tangent sheaf of $\sF$, is a saturated subsheaf of the tangent bundle $\tX$, i.e., $\nF$ is a torsion-free sheaf, called the normal sheaf of $\sF$.
The map on the right is defined by a twisted differential $p$-form $\omega_\sF \in H^0(\Omega_X^p \otimes \det \nF)$, where $p = \rk \nF$ is the codimension of $\sF$. The vanishing locus of $\omega_\sF$ is called the singular scheme of $\sF$ and denoted by $\sing(\sF)$. It coincides, set-theoretically, with the locus where $\nF$ is not locally free; in particular, $\codim \sing(\sF) \geq 2$. 

A distribution $\sF$ is called a foliation if, for each point $x\in X\setminus \sing(\sF)$, there exists a unique smooth immersed analytic subvariety $\mathcal{L} \ni x$ whose tangent space at $y \in \mathcal{L}$ coincides with $\tF(y)$; this is called a leaf of $\sF$. Due to Frobenius's Theorem, $\sF$ is a foliation if and only if $[\tF, \tF] \subset \tF$. 

On the other hand, given a line bundle $L$ and a twisted $p$-form $\omega\in H^0(\omx{p}\otimes L)$, it defines a distribution if the kernel of the contraction map $\tX \to \omx{p-1}\otimes L$, $v\mapsto \iota_v\omega$, has rank $n-p$. This is equivalent to asking that for every point $x$ not in the zero locus of $\omega$, there exists an open neighborhood $U\ni x$ and local $1$-forms $\alpha_1, \dots, \alpha_p$ such that $\omega|_U = \alpha_1\wedge \dots \wedge \alpha_p$; such differential forms are called locally decomposable off the singular set (LDS, for short). Equivalently, via de Rham's division lemma, we have Pl\"ucker’s decomposability condition: for every (local) section $\xi \in \extp^{p-1}\tX(U)$
\begin{equation}\label{eq:pluck}
    (\iota_\xi \omega)\wedge \omega = 0.
\end{equation}
Moreover, Frobenius's integrability condition translates to $\alpha_j \wedge d\omega = 0$, or 
\begin{equation}\label{eq:integra}
    (\iota_\xi \omega)\wedge d\omega = 0.
\end{equation}

A foliation $\sF$ is algebraically integrable if a general leaf is an algebraic variety, i.e., open in its Zariski closure. In this case, there is a dominant rational map $f \colon X \dashrightarrow Y$ with irreducible general fibers, such that the leaves of $\sF$ are open subsets of the fibers of $f$. For details, see \cite[\S3]{araujo-druel:fano-foliations}. The following observation will be useful later. 

\begin{lemma}\label{lem:normal-fibration}
Let $\varphi \colon X \dashrightarrow Y$ be a dominant rational map between projective manifolds with Picard groups isomorphic to $\ZZ$, such that $\varphi^*\OO_Y(1) = \OO_X(1)$. If $\sF$ is the foliation given by the fibers of $\varphi$, then 
\[
c_1(\nF) = c_1(TY) - \delta
\]
where $\delta \geq 0$ is the degree of the ramification divisor of $\varphi$. Moreover, if $y = \dim Y$ and $c_1(TY) = \min\{\, l \mid H^0(\Omega^y_X(l))\neq 0 \,\}$, then $\delta =0$.
\end{lemma}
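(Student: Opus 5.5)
We express the normal sheaf of the fibration through the pullback of forms. Let $U\subset X$ be the open set where $\varphi$ is a morphism; its complement has codimension at least $2$, so first Chern classes and divisors can be computed on $U$. On $U$, the foliation $\sF$ given by the fibers of $\varphi$ is defined by the twisted $y$-form obtained by pulling back a generator of $\Omega^y_Y\otimes \omega_Y^{-1}$. Concretely, the differential $d\varphi^\vee\colon \varphi^*\Omega^1_Y \to \Omega^1_X$ induces
\[
\varphi^*\omega_Y \lra \Omega^y_X,
\]
that is, a section $\eta\in H^0(\Omega^y_X\otimes \varphi^*\omega_Y^{-1})$. Since $\varphi^*\OO_Y(1)=\OO_X(1)$ and both Picard groups are $\ZZ$, we have $\varphi^*\omega_Y^{-1}=\OO_X(c_1(TY))$. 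This form is locally decomposable (locally it is $d\varphi_1\wedge\dots\wedge d\varphi_y$ in local coordinates of $Y$) and integrable, and its kernel is $\tF$.

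Next we saturate. The form $\eta$ vanishes in codimension one exactly along the divisorial part of the locus where $d\varphi$ drops rank, namely the ramification divisor $R$ (including multiple fiber components). Since $\codim \sing(\sF)\ge 2$, we can write $\eta = s_R\cdot \omega_\sF$, where $s_R$ is a section of $\OO_X(R)$ and $\omega_\sF\in H^0(\Omega^y_X\otimes\det\nF)$ defines $\sF$ with zeros of codimension at least $2$. Hence $\det \nF = \OO_X(c_1(TY))\otimes\OO_X(-R)$. Setting $\delta=\deg R\ge 0$, with $R$ effective, gives $c_1(\nF)=c_1(TY)-\delta$. The point needing care is identifying the divisorial zeros of $\eta$ with $R$, in the sense of the divisor where $\varphi^*\omega_Y\to\Omega^y_X$ fails to be saturated. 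We take this as the definition of $\delta$; it is standard, as in \cite[\S3]{araujo-druel:fano-foliations}.

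For the last claim, $\omega_\sF$ is a nonzero section of $\Omega^y_X(c_1(TY)-\delta)$. If $c_1(TY)$ equals the minimal $l$ with $H^0(\Omega^y_X(l))\neq 0$, then $c_1(TY)-\delta\ge c_1(TY)$, so $\delta\le 0$. Combined with $\delta\ge 0$, this gives $\delta=0$. The only real obstacle is the saturation step; the rest is formal.
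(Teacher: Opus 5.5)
Your proposal is correct and is essentially the paper's own argument. The paper pulls back a rational $y$-form $\eta$ from $Y$ and reads off $c_1(\nF) = (\varphi^*\eta)_\infty - (\varphi^*\eta)_0 = \varphi^*c_1(TY) - \Delta$, which is the same computation as your saturation of the twisted form $\varphi^*\omega_Y \to \Omega^y_X$ along the ramification divisor; your minimality step (a nonzero section of $\Omega^y_X(c_1(TY)-\delta)$ forces $\delta \le 0$) simply spells out what the paper leaves as ``since $\Delta$ is effective.''
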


\begin{proof}
Let $\eta$ be a global rational $y$-form on $Y$ so that the canonical divisor is $K_Y = {\rm div}(\eta) =  (\eta)_0 - (\eta)_\infty$. Then $\varphi^*\eta$ is a global closed rational $y$-form on $X$ defining $\sF$. One then has (at the level of divisor classes)
\[
c_1(\nF) = (\varphi^*\eta)_\infty - (\varphi^*\eta)_0 = \varphi^* ( \eta_\infty -\eta_0) -\Delta = \varphi^*c_1(TY) - \Delta,
\]
where $\Delta$ is the ramification divisor of $\varphi$. Since $\Delta$ is effective, the second part follows.
\end{proof}

\medskip
\paragraph{Space of foliations} Given $X$ a complex projective manifold with $\Pic(X) \cong \ZZ$, and $q,d \in \ZZ$, the \emph{space of foliations} of codimension $q$ and degree $d$ is the quasi-projective variety
\[
\Fol(X,q,d) = \{\, [\omega] \in \PP H^0(\Omega_X^q(d+q+1)) \mid \omega \text{ is integrable and } \codim \sing \omega \geq 2 \,\}.
\]

The study of $\Fol(X,q,d)$ is a central problem in global holomorphic foliation theory. Usually, one fixes $q$ and $d$ and describes the foliations corresponding to a general element of each irreducible component of $\Fol(X,q,d)$. For $X = \pn$, a complete description is only known for $d\leq 1$, or $(d,q) = (2,1)$, but partial results abound. We refer the reader to \cite{CM-deg2} and references therein. For $X$ a homogeneous variety and $(q,d) = (1,0)$ see \cite{BFM,Kuster}. For $X$ a complete intersection, $d\leq 1$ and $q=1$, see \cite{folCompInt}. One result that is recurrent in the literature is that for $q=1$ (e.g., for $X=\pn $ or $X$ a complete intersection) degree-0 foliations are defined by pencils of hyperplane sections:
\[
\Fol(X,1,0) \cong G(2, H^0(\ox(1))).
\]
In Section \ref{S:cod-one foliations}, we will show that this holds for every cominuscule Grassmannian, complementing the main result of \cite{BFM}. We will also give a partial result for $q=d=1$. In Section \ref{S: Examples}, we will provide examples of minimal degree foliations of high codimension.

\medskip
\paragraph{Slope stability}
Let $X$ be a complex projective manifold with $\Pic(X) \cong \ZZ$. The \emph{slope} of a torsion-free coherent sheaf $F$ on $X$ is $\mu(F) = \frac{c_1(F)}{\rk F}$. One says that $F$ is \emph{semistable} if for every $E\subset F$ with $\rk E < \rk F$, one has $\mu(E) \leq \mu(F)$. If, moreover, there is no $E$ such that equality holds, one says that $F$ is \emph{stable}. Given a distribution (or foliation) $\sF$ on $X$, the stability of $\tF$ controls both the possible subfoliations and the algebraic integrability, especially when $\mu(\tF) > 0$. We refer the reader to \cite{Carol-ICM, CP} and references therein for results in this direction. We will need the following lemma.

\begin{lemma}\label{lem:integ-stab}
Let $X$ be a complex projective manifold with $\Pic(X) \cong \ZZ$, and let $\sF$ be a foliation on $X$ such that $\mu(\tF) >0$. If $q = \codim \sF$ and $c = c_1(\nF)$, and $H^0(\Omega_X^p(c))$ has no integrable forms for $q < p  < \dim X$, then $\sF$ is algebraically integrable with rationally connected leaves. Moreover, if $q=1$ and $X$ is simply connected, then $\sF$ is given by rational first integral of the form $(f^a,g^b) \colon X \dashrightarrow \p1$ with $f\in H^0(\ox(l))$, $g\in H^0(\ox(m))$ satisfying $\gcd(a,b) = 1$, $al=bm$, and $c = l+m$.
\end{lemma}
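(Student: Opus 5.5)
The plan is to combine the algebraic integrability criterion for foliations with semistable or positive tangent sheaf (Bogomolov--McQuillan, Campana--P\u{a}un) with the Harder--Narasimhan filtration of $\tF$. Write $\mu_{\max}$ and $\mu_{\min}$ for the maximal and minimal slopes of this filtration.

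\textbf{Step 1: integrability of the maximal destabilizing piece.} Let $\sH\subset \tF$ be the maximal destabilizing subsheaf, so $\mu(\sH)=\mu_{\max}(\tF)\geq \mu(\tF)>0$. The standard argument shows $\sH$ is involutive. Indeed, the O'Neill tensor $\extp^2\sH\to \tF/\sH$ is $\ox$-linear. We have $\mu_{\min}(\extp^2\sH)=2\mu(\sH)>\mu_{\max}(\tF/\sH)$, since $\sH$ is semistable and every slope of $\tF/\sH$ is $<\mu(\sH)$. Hence this tensor vanishes. So $\sH$ defines a foliation with semistable tangent sheaf of positive slope, and by Campana--P\u{a}un \cite{CP} it is algebraically integrable with rationally connected general leaves.

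\textbf{Step 2: $\sH=\tF$, via the hypothesis.} Let $p=\codim\sH\geq q$. We have $\det N\sH=\det\nF\otimes\det(\tF/\sH)$. Since $\Pic X\cong\ZZ$ and every slope of $\tF/\sH$ is $<\mu(\sH)$, the sign of $c_1(\tF/\sH)$ has to be controlled. The cleaner route is different: if $\sH\neq\tF$ and $\sH\neq 0$, then $\sH$ yields an integrable $p$-form with $q<p<\dim X$, twisted by $c_1(N\sH)$. One compares $c_1(N\sH)=c_1(TX)-c_1(\sH)$ with $c=c_1(TX)-c_1(\tF)$. From $\mu(\sH)\geq\mu(\tF)$ and the rank count alone one does not get $c_1(\sH)\geq c_1(\tF)$.

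I would therefore choose $\sH$ differently: take the last step of the HN filtration of positive slope. Alternatively, take the largest subsheaf $\sH\subset\tF$ with $\mu_{\min}(\sH)>0$, so that $\tF/\sH$ has all slopes $\leq 0$. Then $c_1(\tF/\sH)\leq 0$, so $c_1(N\sH)\leq c$. Multiplying the defining form by a section of $\ox(c-c_1(N\sH))$ gives an integrable element of $H^0(\Omega^p_X(c))$. Multiplying by a function preserves LDS and integrability conditions \eqref{eq:pluck}, \eqref{eq:integra} away from a divisor, hence everywhere. This contradicts the hypothesis unless $p=q$, i.e.\ $\sH=\tF$.

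Involutivity of this $\sH$ follows as in Step 1, since $\mu_{\min}(\extp^2\sH)>0\geq\mu_{\max}(\tF/\sH)$. Algebraic integrability with rationally connected leaves then follows from the Campana--P\u{a}un criterion $\mu_{\min}>0$. Since $\sH=\tF$ has $\mu_{\min}(\tF)>0$, Campana--P\u{a}un again gives that $\sF$ is algebraically integrable with rationally connected leaves. The case $\sH=0$ is impossible because $\mu_{\max}(\tF)>0$. \emph{This choice of $\sH$ and the slope bookkeeping is the main point to get right.}

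\textbf{Step 3: codimension one.} For $q=1$, $\sF$ has a rational first integral $X\dashrightarrow C$, and $C\cong\p1$ because $X$ is rationally connected. Since $X$ is simply connected and $\Pic X\cong\ZZ$, the fibration has at most two multiple fibers and otherwise reduced irreducible fibers in a suitable sense. Write the two possibly multiple fibers as $a\{f=0\}$ and $b\{g=0\}$ with $f,g$ irreducible of degrees $l,m$. Equality of fiber classes gives $al=bm$, and we may take $\gcd(a,b)=1$ after absorbing common factors. The defining form $bg\,df-af\,dg$ then lives in $H^0(\Omega^1_X(l+m))$ and has codimension-two zeros. This holds once there are no further non-reduced fibers, which a Riemann--Hurwitz/simple-connectivity argument on the orbifold base gives. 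Hence $c=l+m$.
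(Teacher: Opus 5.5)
\textbf{Steps 1--2.} These are fine and follow the paper's route. The paper obtains the same dichotomy from \cite[Prop.~7.5]{araujo-druel:fano-foliations}, which is the Harder--Narasimhan/Campana--P\u{a}un statement you reprove. Your argument then proceeds exactly as the paper's: take the last positive-slope piece $\sH$; since $\tF/\sH$ has slopes $\le 0$, you get $c_1({\rm N}\sH)\le c$; then multiply by a section of $\ox(c-c_1({\rm N}\sH))$. Your choice of $\sH$ even handles cleanly the case where $\tF$ is unstable but all its slopes are positive. Step~1 is superfluous.

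\textbf{The gap is in Step~3.} The paper does not prove this part; it quotes it from \cite[Prop.~3.5]{LPT13}. In your argument you use only $\pi_1(X)=1$, $\Pic X\cong\ZZ$ and the existence of a first integral $\Phi\colon X\dashrightarrow\p{1}$. These do not give $c=l+m$.
\begin{itemize}
\item The orbifold fundamental group of the base only detects \emph{multiple} fibres, i.e.\ fibres $\sum m_iD_i$ with $\gcd(m_i)>1$.
\item It says nothing about fibres having non-reduced components of coprime multiplicities, nor about whether $\{f=0\}$ and $\{g=0\}$ are reduced or irreducible.
\item Every such component is a divisorial zero of the defining form $ag\,df-bf\,dg$ (not $bg\,df-af\,dg$), and so pushes $c$ below $l+m$.
\end{itemize}

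Here is a concrete example. On $\p{3}$, the pencil $x^2y/z^3$ has irreducible general fibre and defines $\omega=2yz\,dx+xz\,dy-3xy\,dz\in H^0(\Omega^1_{\p{3}}(3))$, whose singular set has codimension two. So $c=3$ and $\mu(\tF)=1/2>0$. Its special fibres are $2\{x=0\}+\{y=0\}$ and $3\{z=0\}$. For every presentation $f^a/g^b$ with $\gcd(a,b)=1$, the form $ag\,df-bf\,dg$ vanishes along $\{x=0\}$, hence $l+m\ge 4$ (e.g.\ $f=x^2y$, $g=z$, $(a,b)=(1,3)$). This example is excluded only by the hypothesis you never invoke in Step~3: projection from $[0:0:0:1]$ gives an integrable element of $H^0(\Omega^2_{\p{3}}(3))$.

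\textbf{How to close the gap.} Either cite \cite[Prop.~3.5]{LPT13} as the paper does, or use that hypothesis with $p=2$ when $\dim X\ge 3$. (If $\dim X=2$, then $X\cong\p{2}$ and $c=2$.)
\begin{enumerate}
\item Let $\{f_1=0\}\neq\{f_2=0\}$ be irreducible $\sF$-invariant hypersurfaces of degrees $d_1,d_2$. Then $\eta=d_2\,df_1/f_1-d_1\,df_2/f_2$ is a closed logarithmic $1$-form.
\item Invariance makes $\eta\wedge\omega$ a holomorphic section of $\Omega^2_X(c)$. It is locally decomposable and integrable, because $d(\eta\wedge\omega)=-\eta\wedge d\omega$ and $\omega\wedge d\omega=0$. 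By the hypothesis it is therefore zero.
\item Hence $f_1^{d_2}/f_2^{d_1}$ is a rational first integral.
\item If both hypersurfaces lay in the same fibre of $\Phi$, this would be a nonconstant rational function of $\Phi$ whose zeros and poles all lie over a single point of $\p{1}$, which is impossible.
\item So every fibre is $m_tD_t$ with $D_t$ irreducible and reduced. Only now does your $\pi_1$ argument give at most two $m_t>1$, coprime, with $f,g$ reduced and $c=l+m$.
\end{enumerate}

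A minor point: $C\cong\p{1}$ follows directly from $h^{1,0}(X)=0$. Your argument did not establish that $X$ is rationally connected.
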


\begin{proof}
By \cite[Prop. 7.5]{araujo-druel:fano-foliations}, either $\tF$ is semistable and $\sF$ is algebraically integrable with rationally connected leaves, or there exists a subfoliation $\sG \subset \sF$ such that $c_1(\tG) \geq c_1(\nF)$, equivalently $c_1(\nG) \leq c_1(\nF) = c$. Such a foliation $\sG$ would be defined by an integrable form $\eta \in H^0(\Omega^p_X(c_1(\nG)))$ for some $p>q$. But $H^0(\Omega^p_X(c_1(\nG)))$ has no integrable form by hypothesis. Indeed, multiplying by an element of $H^0(\ox(c-c_1(\nG)))$ would produce a nonzero integrable form in $H^0(\Omega^p_X(c))$. 

Finally, if $q=1$ and $X$ is simply connected, then \cite[Prop. 3.5]{LPT13} implies that $\sF$ is given by a rational first integral as in the statement.
\end{proof}

\section{Differential forms on cominuscule Grassmannians} \label{S: diff forms}
In the papers \cite{Snow-quad,Snow-comin}, Dennis Snow provided a series of results characterizing the cohomology of twisted differential forms $H^p(\Omega^q_X(k))$ for $X$ a cominuscule Grassmannian, also called \emph{irreducible Hermitian symmetric space}. The strategy is to describe the weights of $\Omega^q_X(k)$ and apply the Bott-Borel-Weil Theorem. The description of the weights follows from Kostant's work \cite{Kos}. Here, we reproduce some of Snow's results in a more straightforward and perhaps concrete way, by a case-by-case analysis; even though we focus only on global sections, our results allow to identify explicitly the representations corresponding to the space of twisted global differential forms. Our main interest is in computing the smallest $k$ for which $H^0(\Omega^p_X(k)) \neq 0$. For the classical Grassmannians, we follow \cite{KU-fano}. 

\medskip
\paragraph{Classical Grassmannians}
Consider the classical Grassmannian $X = G(k,V)$ of $k$-dimensional subspaces of $V$, $\dim V = n$, with tautological sequence
\[
0 \longrightarrow \U \longrightarrow \OO_X\otimes V \longrightarrow \Q \longrightarrow 0
\]
where $\U$ is the tautological rank $k$ bundle. Following \cite{KU-fano}, we write the fundamental weights of $\fsl(V)$ as $\lambda_i = e_1 + \cdots +e_i$, for $1\leq i \leq n-1$,  where $\epsilon = \{ e_1, \dots , e_{n}\}$ a basis of $\RR^{n}$. Moreover, $\lambda_n = e_1 + \cdots + e_n$ is identified with $0$. For a weight $\beta$, one writes it as $\beta = \sum \beta_j e_j=\beta_1\lambda_1+\beta_2(\lambda_2-\lambda_1)+\cdots+\beta_n(\lambda_n-\lambda_{n-1})$, or
\[
\beta = (\beta_1, \dots, \beta_k; \beta_{k+1}, \dots, \beta_n).
\]
In this notation, $\beta$ is dominant if and only if $\beta_{j} \geq \beta_{j+1}$ for $j=1, \dots, n-1$. If $E_\beta$ is the corresponding vector bundle, the dual $E_{\beta}^\vee$ corresponds to the weight
\[
(-\beta_k , \dots, -\beta_1; -\beta_n, \dots, -\beta_{k+1})
\]
In particular, $\U^\vee$ is associated with the weight $\lambda_1$ and $\Q$ with $\lambda_{n-1}$. Then, $TX = \U^\vee \otimes \Q$ corresponds to 
\[
\lambda_1 + \lambda_{n-1} = (2,1,\dots,1;1,\dots,1,0) = (1, 0 , \dots, 0,-1)
\]
and $\Omega^1_X = \Q^\vee \otimes \U$ corresponds to $(0, \dots, 0,-1;1,0,\dots,0) = \lambda_{k-1}-2\lambda_k+\lambda_{k+1}$, According to \cite[p.60]{weyman:tract},
\[
\Omega^p_{G(k,V)} = \extp^p(\mathcal{Q}^\vee \otimes \mathcal{U}) = \bigoplus_{|\mu| = p}  \Gamma^{\mu}\mathcal{U} \otimes \Gamma^{\mu'} \mathcal{Q}^\vee,
\]
where $\Gamma^\mu$ is the Schur functor associated with the partition $\mu$ and $\mu'$ is the dual partition of $\mu$. 

\begin{remark}\label{rem:notation}
In Weyman's notation $\Gamma^\mu = L_{\mu'}$. In particular, $\Gamma^{(p)}E = S^pE$ is the symmetric power.    
\end{remark}

The summand corresponding to the partition $\mu$ has weight
\[
\beta_\mu = (-\mu_k, \dots, -\mu_1; \mu_1', \dots, \mu_{n-k}').
\]
In particular, $\mu$ has at most $k$ parts, i.e., $\mu_1' \leq k$ and, similarly, $\mu_1 \leq n-k$. Graphically, the Young diagram of $\mu$ fits within a $(n-k)\times k$ rectangle.

\begin{lemma}\label{lem:h0-min}
$H^0(\Gamma^{\mu}\mathcal{U} \otimes \Gamma^{\mu'} \mathcal{Q}^\vee(l)) \neq 0$ if and only if $\mu_1+\mu'_1 \leq l$. In particular, 
\[
\mu_1 + \mu'_1 = \min\{\, j \in \ZZ \mid H^0(\Gamma^{\mu}\mathcal{U} \otimes \Gamma^{\mu'} \mathcal{Q}^\vee(j)) \neq 0\,\}.
\]
\end{lemma}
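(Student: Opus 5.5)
The plan is to apply Borel--Weil directly to the irreducible homogeneous bundle $\Gamma^{\mu}\mathcal{U} \otimes \Gamma^{\mu'} \mathcal{Q}^\vee(l)$. Recall that $\ox(1) = \det \U^\vee = \det \Q$ corresponds to the weight $\lambda_k = (1,\dots,1;0,\dots,0)$. So twisting by $\ox(l)$ adds $l$ to each of the first $k$ entries. The summand $\Gamma^{\mu}\mathcal{U} \otimes \Gamma^{\mu'} \mathcal{Q}^\vee(l)$ is therefore the irreducible bundle $E_\beta$ with
\[
\beta = \beta_\mu + l\lambda_k = (l-\mu_k, \dots, l-\mu_1;\ \mu_1', \dots, \mu_{n-k}').
\]
Irreducibility holds because a tensor product of irreducible representations of the two $GL$ factors of the Levi is irreducible.

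By Borel--Weil (the degree-zero case of Bott's theorem), an irreducible homogeneous bundle $E_\beta$ on $G/P$ has nonzero global sections if and only if $\beta$ is dominant for $G$, in which case $H^0(E_\beta) = V_\beta^\vee$ (or $V_\beta$, depending on conventions). In the $e_j$-coordinates, dominance means the sequence of entries is weakly decreasing.

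The entries inside each block are already decreasing, since $\mu_1 \geq \dots \geq \mu_k$ gives $l-\mu_k \geq \dots \geq l-\mu_1$, and $\mu'$ is a partition. The only remaining condition is at the junction between the two blocks, namely
\[
l - \mu_1 \geq \mu'_1, \qquad \text{i.e.} \qquad \mu_1 + \mu'_1 \leq l.
\]
This gives the equivalence, and the ``in particular'' statement follows at once because the condition is monotone in $l$.

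The main obstacle is bookkeeping rather than substance: the convention for which weight gives sections must match the paper's. The paper uses weights that are dominant for sections (for instance $\U^\vee \leftrightarrow \lambda_1$, and $\ox(1) = \det\U^\vee$ has sections), so I will check that $\beta_\mu$ is written in the convention where $E_\lambda$ with $\lambda$ dominant is globally generated with $H^0 \neq 0$. I will also check the ordering of entries of $\Gamma^\mu\U$ within the first block. As a sanity check, for $p=1$ we have $\mu = (1)$, so $\mu_1 + \mu_1' = 2$, and this matches the known fact that $H^0(\Omega^1_X(2)) \neq 0$ while $H^0(\Omega^1_X(1)) = 0$.
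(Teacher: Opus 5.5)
Your proposal is correct and follows essentially the same route as the paper's proof. You identify the weight $\beta_\mu + l\lambda_k = (l-\mu_k,\dots,l-\mu_1;\,\mu_1',\dots,\mu_{n-k}')$ and apply Borel--Weil, and you observe that dominance reduces to the single junction inequality $l-\mu_1 \geq \mu_1'$.
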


\begin{proof}
Using the notation above, $\Gamma^{\mu}\mathcal{U} \otimes \Gamma^{\mu'} \mathcal{Q}^\vee(l)$ corresponds to the weight
\[
\beta_\mu + l \lambda_k = (l-\mu_k, \dots, l-\mu_1; \mu_1', \dots, \mu_{n-k}').
\]
By the Bott-Borel-Weyl Theorem, $H^0(\Gamma^{\mu}\mathcal{U} \otimes \Gamma^{\mu'} \mathcal{Q}^\vee(l)) \neq 0$ if and only if $\beta_\mu + l\, \lambda_k$ is dominant. This is equivalent to $l-\mu_1 \geq \mu_1'$, which concludes the proof.
\end{proof}

As a consequence of this lemma, we compute the minimum twist for which $\Omega_{G(k,n)}^p$ has global sections.

\begin{proposition}\label{prop:min-deg-grass}
Let $X = G(k,n)$ be a classical Grassmannian, where $n\geq 2k$, and let $l(p) = \min \{\, l \in \ZZ \mid H^0(\Omega_X^p(l)) \neq 0 \,\}$. 
Then
\[
l(p) = \begin{cases}\displaystyle
     k + \ceil*{\frac{p}{k}}, & k^2 < p < k(n-k) \\[2mm]
    \displaystyle \ceil*{2\sqrt{p}}, & p \leq k^2
\end{cases}.
\]
\end{proposition}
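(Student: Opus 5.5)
By the decomposition $\Omega^p_{G(k,V)}=\bigoplus_{|\mu|=p}\Gamma^\mu\mathcal U\otimes\Gamma^{\mu'}\mathcal Q^\vee$ and Lemma \ref{lem:h0-min}, we have
\[
l(p)=\min\{\,\mu_1+\mu_1' \;:\; |\mu|=p,\ \mu \text{ fits in the } (n-k)\times k \text{ box}\,\},
\]
where $\mu_1'\le k$ is the number of parts and $\mu_1\le n-k$ is the largest part. The plan is therefore to solve this purely combinatorial optimization problem on Young diagrams.

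\textbf{Lower bound.} A partition with largest part $a=\mu_1$ and $b=\mu_1'$ parts fits in an $a\times b$ rectangle, so $p=|\mu|\le ab$. We minimize $a+b$ subject to $ab\ge p$, $b\le k$ and $a\le n-k$.
\begin{itemize}
\item Without the box constraints, AM--GM gives $a+b\ge 2\sqrt{ab}\ge 2\sqrt p$, and since $a+b$ is an integer, $a+b\ge\lceil 2\sqrt p\rceil$.
\item Using $b\le k$: from $a\ge p/b$ we get $a+b\ge b+\lceil p/b\rceil$. The function $b\mapsto b+p/b$ is decreasing for $b\le\sqrt p$. So when $p>k^2$, every admissible $b\le k<\sqrt p$ gives $a+b\ge k+\lceil p/k\rceil$. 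To make this rigorous, compare $b+\lceil p/b\rceil$ with $k+\lceil p/k\rceil$ for integers $b<k$: since $b+p/b\ge k+p/k$ on this range and the $b$-side is already an integer, taking ceilings preserves the inequality.
\end{itemize}

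\textbf{Upper bound (constructions).} For $p\le k^2$, set $l=\lceil 2\sqrt p\rceil$. We need integers $a,b$ with $a+b=l$ and $ab\ge p$; the balanced choice $b=\lfloor l/2\rfloor$, $a=\lceil l/2\rceil$ works, since $\lfloor l^2/4\rfloor\ge p$ follows from $l\ge 2\sqrt p$ (checking the parity cases of $l$). We must also verify $b\le k$ and $a\le n-k$:
\begin{itemize}
\item From $p\le k^2$ we get $l\le 2k$, hence $b\le a\le k$.
\item Since $n\ge 2k$, we have $k\le n-k$, so $a\le n-k$.
\end{itemize}
Then take $\mu$ to be a partition with $\mu_1=a$, exactly $b$ parts and $|\mu|=p$. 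This exists because $a+b-1\le p\le ab$: fill the first row and first column (a hook of size $a+b-1$), then add boxes inside the $a\times b$ rectangle. The inequality $a+b-1\le p$ needs a quick check from $l=\lceil2\sqrt p\rceil$, namely $2\sqrt p+1-1\le p$ for $p\ge 4$; the small values of $p$ are handled directly, and if the hook is too large we decrease $a$ or $b$.

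For $k^2<p<k(n-k)$, take $b=k$ and $a=\lceil p/k\rceil$. Then $a\le n-k$ because $p\le k(n-k)$, and $a\ge k$, so a partition with $k$ rows, first row $a$ and $|\mu|=p$ exists (fill the rows as evenly as possible).

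The main obstacle is the integer bookkeeping: matching the ceilings in the lower bound with the constructions, and in particular checking that in the regime $p\le k^2$ the bound $\lceil 2\sqrt p\rceil$ is attained within the box without the hook condition failing.
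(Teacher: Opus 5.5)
Your proof is correct and follows the same route as the paper. You reduce via Lemma~\ref{lem:h0-min} to minimizing $\mu_1+\mu_1'$ over partitions of $p$ in the box, bound this from below by AM--GM and by the monotonicity of $x+\lceil p/x\rceil$ for $x\le k<\sqrt p$, and attain the bound with partitions inside a suitable rectangle. Your execution is a bit tidier than the paper's in two places:
\begin{itemize}
\item Real-variable monotonicity of $b+p/b$, together with $\lceil b+p/b\rceil=b+\lceil p/b\rceil$, replaces the paper's discrete ceiling comparison.
\item The balanced choice $b=\lfloor l/2\rfloor$, $a=\lceil l/2\rceil$ replaces the discriminant argument and explicitly gives $\mu_1\le k\le n-k$, a constraint the paper's ``say $x\le k$'' step passes over.
\end{itemize}
The hook check is not needed: any partition of $p$ inside the $a\times b$ rectangle already has $\mu_1+\mu_1'\le l$, and with the lower bound that suffices.
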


\begin{proof}
This follows from Lemma \ref{lem:h0-min}. Note that for a given $p$, we need to find the partitions $\mu$ of $p$ that minimize $\mu_1+\mu'_1$. Drawing Young diagrams, one sees that $\mu$ is a diagram of area $p$ inside a $\mu_1'\times \mu_1$ rectangle. To find the optimal rectangles, note that if $\mu_1'$ is given, the smallest $\mu_1$ is $\mu_1 = \ceil*{\frac{p}{\mu_1'}}$. Hence, we want to minimize $f(x) = x + \ceil*{\frac{p}{x}}$ for $1\leq x\leq k$. 

Suppose that $p> k^2$. We will show that the minimum is attained at $\mu_1 = k$. For $x\geq 2$, one has $\ceil*{\frac{p}{x}} \leq \ceil*{\frac{p}{x-1}}$. If equality holds, $\ceil*{\frac{p}{x}} = \ceil*{\frac{p}{x-1}} = u$, then $u-1 < \frac{p}{x} < \frac{p}{x-1} \leq u$. Rewriting, one gets $(u-1)x < p < u(x-1)$ which implies that $x > u$. On the other hand, since $xu \geq p$, one has that $x > u$ implies $x > \sqrt{p}$. Therefore, for $x \leq k < \sqrt{p}$, $f(x) - f(x-1) = 1+ \ceil*{\frac{p}{x}} -\ceil*{\frac{p}{x-1}} \leq 0$. Thus, $f(x)$ is nonincreasing for integers $x\leq k < \sqrt{p}$, and the minimum is attained at $x=k$.

Next, suppose that $p \leq k^2$. Instead of minimizing $f(x)$ as in the previous case, we observe that for any $a,b$ such that $ab\geq p$, one has $a+b \geq 2\sqrt{ab} \geq 2\sqrt{p}$. Fix $m_p = \ceil*{2\sqrt{p}}$. We will show that $l(p) = m_p$. For the moment, what we have said implies that $l(p)\geq m_p$.  Consider $g(x) = x^2 -m_px+p$ and note that for any partition $\mu$, satisfying $\mu_1+\mu_1' = m_p$, $g(\mu_1) \leq 0$. Conversely, if $g(x) \leq 0$ for some integer $x$, either $x \leq \frac{m_p}{2} \leq k$ or $m_p - x \leq k$. Let us show that $g(x) \leq  0 $ for some integer $x$. Consider the discriminant $\Delta = m_p^2 - 4p$; note that $\Delta \geq 0$. If $\Delta \geq 1$, then there must be an integer between the roots of $g$, and we are done. But if $\Delta <1$ then $\Delta = 0$ and $4p = m_p^2$; then $p$ is a square, $m_p$ is even, and the only root of $g$ is $x = \frac{m_p}{2}\in \ZZ$. So $x$ exists, and either $x$ or $m_p-x$ is $\leq k$. Say $x\leq k$, then $\nu=((m_p-x)^x)$ is a rectangular partition satisfying $|\nu|=x(m_p-x)\geq p$; by removing some blocks from $\nu$ one obtains a partition $\mu$ such that $|\mu|=p$ and $\mu_1+\mu_1'\leq m_p$ thus showing that $l(p)\leq m_p$, thus concluding the proof.
\end{proof}

\begin{remark}\label{rem:unique-rectangles}
In Proposition \ref{prop:min-deg-grass}, although $l(p)$ is determined, the partitions, or even the minimal rectangles they fit into, may not be unique. For instance, if $k=3$ and $p=10$ one gets $l(p) = 3 + \ceil*{\frac{10}{3}} = 7$, but also $2 + \ceil*{\frac{10}{2}} = 7$. So $\mu_1 = 2,3$ give two minimal rectangles. We get the minimal partitions $(2^5)$ and $(3^3,1)$. For $k=3$ and $p=7$ one gets $l(p) = \ceil*{2\sqrt{7}} = 6$. Hence, the partitions are $(4,3)$, $(3^2,1)$, and $(3,2^2)$. The last two fit in a $3\times 3$ square. Nonetheless, the proof of the proposition gives a recipe for determining all the possible partitions for given $k$ and $p$. 
\end{remark}

\begin{remark}\label{rem:rect-part}
In the last section, we will construct foliations associated with a rectangular partition $\mu = (d^e)$, where $d\leq n-k$ and $e\leq k$. These partitions correspond to representations $V_{\beta_\mu + l\lambda_k}V^\vee \subset H^0(\Omega^p_{G(k,V)}(l))$, where $l = d+e$ and $p=de$. Imposing that $d+e = l(p)$, i.e., that the form has minimum degree, one has that $d$ and $e$ are solutions to the quadratic equation $x^2 - l(p)x+p = 0$. Therefore, such partitions occur only when $l(p)^2 - 4p = \delta^2$ is a perfect square. In this case, the rectangular partition $\mu$ is unique, unless $d\leq k$ so that $\mu' = (e^d)$ also occurs.
\end{remark}

\medskip
\paragraph{Lagrangian Grassmannians}
The Lagrangian Grassmannian $X= IG(n,2n)$ pa\-ra\-me\-te\-ri\-zes maximal isotropic subspaces of $\CC^{2n}$ with respect to a nondegenerate symplectic form. It is thus given as the zero locus of a global section of $\extp^2\Q$, where $\Q$ is the tautological quotient bundle of $G(n,2n)$. The cotangent bundle is $\Omega_X^1 = S^2\Q_X^\vee$, where \( \Q_X \) denotes the restriction of \( \Q \) to \( X \). By \cite[Proposition 2.3.9]{weyman:tract} and Remark \ref{rem:notation}, 
\[
\Omega_X^p = \extp^p S^2\Q_X^\vee = \bigoplus_{\mu \in Q_{1}(2p)} \Gamma^\mu \Q_X^\vee
\]
where $Q_{1}(2p)$ is the set of partitions of $2p$ that can be written as $(a_1, \dots, a_n | b_1, \dots, b_n)$ with $a_i = b_i +1$ in hook notation. In particular, $\mu_1' = \mu_1-1$.

\begin{lemma}\label{lem:h0-lagr}
Let $X= IG(n,2n)$, and let $\Q_X$ denote the restriction of $\Q$ to $X$. Then, for $\mu \in Q_{1}(2p)$, we have $H^0(\Gamma^\mu \Q_X^\vee(l)) \neq 0$ if and only if $l\geq \mu_1$.
\end{lemma}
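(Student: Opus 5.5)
The plan is to mirror the proof of Lemma \ref{lem:h0-min}: identify the weight of $\Gamma^\mu\Q_X^\vee(l)$ and apply the Bott--Borel--Weil Theorem, which says $H^0$ is nonzero exactly when that weight is dominant. Here $X = Sp(2n)/P_n$, where $P_n$ is the maximal parabolic of the long simple root $\alpha_n$. Its Levi factor is $GL_n$, and $\Q_X^\vee \cong \U$ (via the symplectic form, $\Q_X\cong \U^\vee$) is the irreducible bundle coming from the standard representation of $GL_n$, or its dual depending on conventions.

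I will use the standard basis $e_1,\dots,e_n$ for weights of $C_n$, with $\lambda_i = e_1+\dots+e_i$. A weight $\beta=(\beta_1,\dots,\beta_n)$ is dominant if and only if $\beta_1\geq \beta_2\geq\cdots\geq\beta_n\geq 0$. The bundle $\U^\vee=\Q_X$ has highest weight $\lambda_1=e_1$, so its weights are $e_1,\dots,e_n$. Hence $\U=\Q_X^\vee$ has weights $-e_i$, and its highest weight is $-e_n$. The Schur functor $\Gamma^\mu$ applied to the $GL_n$-representation $\U$, for a partition $\mu$ with at most $n$ parts, then has highest weight
\[
(-\mu_n,\dots,-\mu_1),
\]
exactly as in the type $A$ computation with $\beta_\mu$. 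The partitions in $Q_1(2p)$ have $\mu_1' = \mu_1 - 1\le n$ rows in the relevant range, so $\Gamma^\mu\Q_X^\vee$ is nonzero and irreducible. Finally, $\OO_X(1)$ corresponds to $\lambda_n = (1,\dots,1)$, so $\Gamma^\mu\Q_X^\vee(l)$ has weight
\[
(l-\mu_n,\dots,l-\mu_1).
\]

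Dominance requires the entries to be nonincreasing, which holds automatically, and requires the last entry to be nonnegative, i.e. $l-\mu_1\geq 0$. By Bott--Borel--Weil this gives $H^0\neq 0$ if and only if $l\geq \mu_1$.

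The main point needing care is fixing conventions: whether $\Q_X^\vee$ corresponds to $-e_n$ or $e_1$ (that is, whether to use $\Q$ or $\U^\vee$), and checking that $\OO_X(1)$ is indeed $\lambda_n$ for the minimal (Plücker) embedding rather than $2\lambda_n$. I will settle the first by noting that $\Q_X\cong\U^\vee$ is globally generated with $H^0=V^\vee$, so it has dominant highest weight $\lambda_1$. I will settle the second by recalling that the Plücker line bundle restricted to $X$ generates $\Pic X$ and corresponds to $\lambda_n$. Once these are fixed, the argument is the one-line dominance check above.
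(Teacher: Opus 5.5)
Your proposal is correct and is essentially the paper's argument: both apply Bott--Borel--Weil to the weight of $\Gamma^\mu\Q_X^\vee(l)$, and your weight $(l-\mu_n,\dots,l-\mu_1)$ is exactly the paper's $\beta_\mu+(l-\mu_1)\lambda_n$ written in the $e$-basis. The only difference is how the $\lambda_n$-coefficient $-\mu_1$ is obtained: the paper gets it from Lemma \ref{lem:compute a} via Killing-form pairings, while you read it off directly from the $GL_n$ Levi using $\Q_X^\vee\cong\U$, calibrating conventions with $\Q_X=E_{\lambda_1}$ and $\OO_X(1)=E_{\lambda_n}$.
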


\begin{proof}
Note that $\Q_X^\vee = E_{\lambda_{n-1}-\lambda_n}$. Since $X = C_n/P_n$ and the semisimple part of $P_n$ is of type $A_{n-1}$, one gets 
\[
\Gamma^\mu E_{\lambda_{n-1}-\lambda_n} = E_{\beta_\mu + a\lambda_n}, \quad \text{where} \quad \beta_{\mu} := \sum_{i=1}^{n-1}(\mu_i-\mu_{i+1})\lambda_{n-i}.
\]
By Lemma \ref{lem:compute a} and using that $\langle \lambda_j, \lambda_n \rangle = j$ (see \cite[p.296]{OniVinberg}), 
\begin{align*}
    a & = \frac{\langle |\mu|(\lambda_{n-1}-\lambda_n) -\beta_\mu, \lambda_n \rangle}{\langle \lambda_n, \lambda_n \rangle}= \frac{1}{n}\left(-|\mu| - \sum_{i=1}^{n-1}(\mu_i-\mu_{i+1})(n-i) \right)= \\
    & = \frac{1}{n}\left(-|\mu| - \sum_{i=1}^{n-1}\mu_i(n-i) +  \sum_{j=2}^{n}\mu_{j}(n-j+1) \right)= \frac{1}{n}\left(-\mu_1 - \mu_1(n-1) \right) = -\mu_1.
\end{align*}

We have that $\Gamma^\mu \Q_X^\vee(l) = E_{\gamma}$    where $\gamma = \beta_\mu +(l+a)\lambda_n$. Since $\mu_i\geq\mu_{i+1}$, $\gamma$ is dominant, hence $H^0(\Gamma^\mu \Q_X^\vee(l)) \neq 0$, if and only if $l\geq \mu_1$.
\end{proof}

\begin{proposition}\label{prop:min-deg-lagr}
 Let $X = IG(n,2n)$ be a Lagrangian Grassmannian, and let $l(p) = \min \{\, l \in \ZZ \mid H^0(\Omega_X^p(l)) \neq 0 \,\}$. Then
 \[
 l(p) = \ceil*{\sqrt{2p} + \frac{1}{2}}.
 \]
\end{proposition}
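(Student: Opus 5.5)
By Lemma \ref{lem:h0-lagr}, a summand $\Gamma^\mu\Q_X^\vee(l)$ of $\Omega^p_X(l)$ with $\mu\in Q_1(2p)$ has sections exactly when $l\geq\mu_1$. Since $\Omega^p_X(l)$ is the direct sum of these summands, we get
\[
l(p)=\min\{\mu_1 \mid \mu\in Q_1(2p)\},
\]
where $\mu$ must also have at most $n=\rk\Q_X$ rows. The whole proof is therefore a combinatorial minimization, and the plan is to translate $Q_1(2p)$ into strict partitions.

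\emph{Step 1: reduction to strict partitions.} Write $\mu=(a_1,\dots,a_r\mid b_1,\dots,b_r)$ in Frobenius (hook) notation, with $a_1>\dots>a_r\geq 0$ and $a_i=b_i+1$. Then
\[
|\mu|=\sum_i(a_i+b_i+1)=\sum_i 2(b_i+1).
\]
Setting $c_i=b_i+1$, the condition $|\mu|=2p$ becomes $c_1>c_2>\dots>c_r\geq1$ with $\sum c_i=p$. This gives a bijection between $Q_1(2p)$ and strict partitions of $p$. Under it, $\mu_1=a_1+1=c_1+1$ and $\mu_1'=b_1+1=c_1$.

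\emph{Step 2: the minimization.} A strict partition with largest part $c$ has sum at most $1+2+\dots+c=c(c+1)/2$. Conversely, every $p\leq c(c+1)/2$ with $p\geq c$ is realized by a strict partition with largest part exactly $c$: start from $(c,c-1,\dots,1)$ and delete parts from the small end, then reduce one part, until the sum is $p$. Hence $l(p)=m$, where $m=c+1$ is the least integer with
\[
\frac{m(m-1)}{2}\geq p .
\]
The row constraint is automatic. The number of rows of $\mu$ is $\mu_1'=c$, and $c\leq n$ because $p\leq\dim X=n(n+1)/2$.

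\emph{Step 3: closed form.} The inequality $m^2-m-2p\geq0$ is equivalent to $m\geq \tfrac12+\sqrt{2p+\tfrac14}$. It remains to show that the least such integer equals $\ceil*{\sqrt{2p}+\frac12}$. Equivalently, no integer $m$ lies in the interval
\[
\Bigl[\sqrt{2p}+\tfrac12,\ \sqrt{2p+\tfrac14}+\tfrac12\Bigr).
\]
Suppose one did. Squaring $m-\tfrac12$ gives $2p\leq m^2-m+\tfrac14<2p+\tfrac14$, so the integer $m^2-m$ would lie in $[2p-\tfrac14,2p)$, which is impossible.

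The only step needing care is the bookkeeping of the hook-notation bijection in Step 1: checking that $\mu_1=c_1+1$ and the size formula. Once that is in place, the rest is elementary.
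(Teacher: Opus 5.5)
Your proof is correct and follows the same route as the paper: Lemma~\ref{lem:h0-lagr} reduces the question to minimizing $\mu_1$ over $Q_1(2p)$, and the relation $\mu_1'=\mu_1-1$ together with $|\mu|=2p$ gives $\mu_1(\mu_1-1)\geq 2p$. Your bijection with strict partitions of $p$ and your rounding argument in Step~3 supply what the paper's one-line AM--GM argument leaves implicit: that the bound is attained by a partition with at most $n$ rows, and that the integer condition $\mu_1(\mu_1-1)\geq 2p$ gives the same minimum as $2\mu_1-1\geq 2\sqrt{2p}$. (The only check you leave unstated, that the minimal $c$ satisfies $c\leq p$, is immediate since $c=p$ already satisfies $c(c+1)/2\geq p$.)
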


\begin{proof}
By the same reasoning as for the classical Grassmannians, we have Young diagrams of given area $2p$, and we need to minimize the perimeter of the rectangle containing them. The condition imposed by  $Q_{1}(2p)$ implies that $\mu_1' = \mu_1-1$. Thus $2\mu_1 - 1 \geq 2\sqrt{2p}$ and the minimum is $\mu_1 = \ceil*{\sqrt{2p} +\frac{1}{2}}$. 
\end{proof}

\begin{remark}\label{rem:rect-lagr}
Note that the partition $\mu$ is rectangular if and only if $2p = a(a+1)$, for $a = {\sqrt{2p} -\frac{1}{2}}$. In this case, $H^0(\Omega_X^p(l(p))) = V_{(a+1)\lambda_{n-a}}$. Indeed, a rectangular partition in $Q_1(2p)$ is automatically of the form $(a+1) \times a$; moreover, for such $p$, this is the only partition in $Q_1(p)$ contained in the rectangle of minimal perimeter.
\end{remark}

\medskip
\paragraph{Spinor varieties}
The Spinor variety $X = OG(n,2n)$ parameterizes $n$-dimensional subspaces of $\CC^{2n}$ isotropic to a symmetric form. There are several similarities to the previous case, arising from replacing the symplectic form with a symmetric one. The variety $X$ is defined as the zero locus of a global section of $\extp^2 \Q$, for $\Q$ the tautological quotient bundle of $G(n,2n)$. The cotangent bundle is $\Omega_X^1 = \extp^2\Q_X^\vee$, where $\Q_X$ is the restriction of $\Q$ to $X$. By \cite[Proposition 2.3.9]{weyman:tract} and Remark \ref{rem:notation}, 
\[
\Omega_X^p = \extp^p \bigwedge^2\Q_X^\vee = \bigoplus_{\mu \in Q_{-1}(2p)} \Gamma^\mu \Q_X^\vee
\]
where $Q_{-1}(2p)$ is the set of partitions of $2p$ that can be written as $(a_1, \dots, a_n | b_1, \dots, b_n)$ with $a_i = b_i -1$  in hook notation. In particular, $\mu_1' = \mu_1+1$. 

\begin{lemma}\label{lem:h0-spin}
Let $X = OG(n,2n)$, and let $\Q_X$ denote the restriction of $\Q$ to $X$. Then, for $\mu \in Q_{-1}(2p)$, we have $H^0(\Gamma^\mu \Q_X^\vee(l)) \neq 0$ if and only if $l\geq \mu_1+\mu_2$.
\end{lemma}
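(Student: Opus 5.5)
The plan is to follow the proof of Lemma \ref{lem:h0-lagr} word for word, now working in type $D_n$. We have $X = D_n/P_n$, and the semisimple part of $P_n$ is again of type $A_{n-1}$, with nodes $1,\dots,n-1$. I would first check that $\Q_X^\vee = E_{\lambda_{n-1}-\lambda_n}$. The reason is that $\widehat{\lambda_{n-1}} = \lambda_{n-1}$ sits at an end node of the $A_{n-1}$ chain, so it gives the standard representation of the Levi factor; the $\lambda_n$-coefficient is then pinned down by Lemma \ref{lem:compute a} (equivalently, by $c_1$). The plethysm $\Gamma^\mu V_{\widehat{\lambda}}$ for the standard representation of $A_{n-1}$ is simply $V_\mu$. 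This gives
\[
\Gamma^\mu \Q_X^\vee = E_{\beta_\mu + a\lambda_n}, \qquad \beta_\mu := \sum_{i=1}^{n-1}(\mu_i-\mu_{i+1})\lambda_{n-i},
\]
exactly as in the symplectic case.

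The only real work is computing $a$ with Lemma \ref{lem:compute a}. In type $D_n$ the pairings are no longer uniform. From the inverse Cartan matrix (\cite{OniVinberg}) one has
\[
\langle \lambda_j,\lambda_n\rangle = j/2 \ \text{ for } j\leq n-2, \qquad \langle \lambda_{n-1},\lambda_n\rangle = (n-2)/4, \qquad \langle \lambda_n,\lambda_n\rangle = n/4.
\]
So the term $i=1$ in $\langle\beta_\mu,\lambda_n\rangle$ needs a correction. I would write $(n-2)/4 = (n-1)/2 - n/4$, which gives
\[
\langle\beta_\mu,\lambda_n\rangle = \tfrac12\sum_{i=1}^{n-1}(\mu_i-\mu_{i+1})(n-i) - \tfrac n4(\mu_1-\mu_2).
\]
The sum telescopes, as in the proof of Lemma \ref{lem:h0-lagr}, to $n\mu_1 - |\mu|$. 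Also $\langle|\mu|(\lambda_{n-1}-\lambda_n),\lambda_n\rangle = -|\mu|/2$. Putting these together,
\[
a = \frac4n\Big(-\tfrac{|\mu|}{2} - \tfrac{n\mu_1-|\mu|}{2} + \tfrac n4(\mu_1-\mu_2)\Big) = -2\mu_1 + (\mu_1-\mu_2) = -(\mu_1+\mu_2).
\]
This is the step I expect to be the main obstacle. Nothing here is deep, but the special pairings at the spinor nodes are easy to get wrong, and the correction term at $i=1$ is exactly what makes $\mu_2$ appear.

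The conclusion is then the same as before. We have $\Gamma^\mu\Q_X^\vee(l) = E_\gamma$ with $\gamma = \beta_\mu + (l-\mu_1-\mu_2)\lambda_n$. The coefficients of $\gamma$ on $\lambda_1,\dots,\lambda_{n-1}$ are $\mu_i-\mu_{i+1}\geq 0$. So $\gamma$ is dominant if and only if $l \geq \mu_1+\mu_2$, and Bott--Borel--Weil gives $H^0\neq 0$ exactly in that case. Note that the hypothesis $\mu\in Q_{-1}(2p)$ is never used beyond $\mu$ being a partition with at most $n$ rows.
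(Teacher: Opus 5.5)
Your proposal is correct and is essentially the paper's proof. You identify $\Q_X^\vee = E_{\lambda_{n-1}-\lambda_n}$ and read off $\beta_\mu$ from the $A_{n-1}$ Levi. You then get $a=-(\mu_1+\mu_2)$ from Lemma \ref{lem:compute a} using the type $D_n$ pairings, and conclude by Bott--Borel--Weil. The only cosmetic difference is how the special pairing is handled: you write $\langle\lambda_{n-1},\lambda_n\rangle=(n-2)/4=(n-1)/2-n/4$ so you can reuse the full telescoping sum, whereas the paper isolates the $i=1$ term and telescopes from $i=2$.
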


\begin{proof}
Note that $\Q_X^\vee = E_{\lambda_{n-1}-\lambda_n}$. Since $X = D_n/P_n$ and the semisimple part of $P_n$ is of type $A_{n-1}$,
\[
\Gamma^\mu E_{\lambda_{n-1}-\lambda_n} = E_{\beta_\mu + a\lambda_n}, \quad \text{where} \quad \beta_{\mu} := \sum_{i=1}^{n-1}(\mu_i-\mu_{i+1})\lambda_{n-i}.
\]
From \cite[p.296]{OniVinberg}, we have that $\langle \lambda_j, \lambda_n \rangle = \frac{j}{2}$ if $j\leq n-2$, $\langle \lambda_{n-1}, \lambda_n \rangle = \frac{n-2}{4}$, and $\langle \lambda_{n}, \lambda_n \rangle = \frac{n}{4}$. By Lemma \ref{lem:compute a},    
\begin{align*}
    a &= \frac{\langle |\mu|(\lambda_{n-1}-\lambda_n) -\beta_\mu, \lambda_n \rangle}{\langle \lambda_n, \lambda_n \rangle}=  \frac{1}{n}\left(-2|\mu| -(\mu_1-\mu_2)(n-2) -2\sum_{i=2}^{n-1}(\mu_i-\mu_{i+1})(n-i)\right) \\
    &=  -\mu_1-\mu_2.
\end{align*}
The result follows as in the symplectic case.
\end{proof}

\begin{proposition}\label{prop:min-deg-spinor}
 Let $X = OG(n,2n)$ be a Spinor variety, and let $l(p) = \min \{\, l \in \ZZ \mid H^0(\Omega_X^p(l)) \neq 0 \,\}$. Let $a = \ceil*{\sqrt{2p} - \frac{1}{2}}$ and write $2p = a(a+1)-2b$ for some $0 \leq b < a$. Then 
 \[
 l(p) = \begin{cases}
     2a,   & b \leq a-2 \text{ or } a=1\\
     2a-1, & b = a-1 >0
 \end{cases}.
 \]
\end{proposition}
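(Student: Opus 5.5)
The plan is to turn the minimization of $\mu_1+\mu_2$ over $Q_{-1}(2p)$ into a problem about strict partitions of $p$, and then solve that problem with triangular numbers. By Lemma \ref{lem:h0-spin}, $l(p)=\min\{\mu_1+\mu_2 \mid \mu\in Q_{-1}(2p)\}$, where $\mu$ is also required to fit the size constraints coming from $\operatorname{rk}\Q_X=n$.

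\textbf{Step 1: reparametrize by strict partitions.} Write $\mu=(a_1,\dots,a_r\,|\,b_1,\dots,b_r)$ in Frobenius notation, with $a_i=b_i-1$. Then:
\begin{itemize}
\item the rows and columns are $\mu_i=a_i+i=b_i+i-1$ and $\mu_i'=b_i+i$, for $i\leq r$;
\item $|\mu|=\sum_i(a_i+b_i+1)=2\sum_i b_i$;
\item the condition $a_r\geq 0$ forces $b_r\geq 1$.
\end{itemize}
Hence $\mu\mapsto(b_1>\dots>b_r\geq 1)$ is a bijection between $Q_{-1}(2p)$ and strict partitions of $p$. Moreover $\mu_1=b_1$, and $\mu_2=b_2+1$ if $r\geq 2$, while $\mu_2=1$ if $r=1$ (the first column has length $b_1+1\geq 2$). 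So $l(p)$ is the minimum of $b_1+b_2+1$ over strict partitions of $p$, with the convention $b_2=0$ when $r=1$. I will also check that the constraint $\mu_1'=b_1+1\leq n$ is harmless in the relevant range of $p$; the optimal $b_1$ found below is about $\sqrt{2p}$.

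\textbf{Step 2: the feasibility criterion.} Set $T(m)=m(m+1)/2$. A strict partition with $b_1\leq m$ and $b_2\leq m'<m$ has total at most $m+T(m')$. Conversely, every integer $p$ with $1\leq p\leq m+T(m')$ is realized by some strict partition with $b_1\leq m$ and $b_2\leq m'$: take the parts $m,m',m'-1,\dots,1$ and lower parts one unit at a time, from the bottom, keeping strictness. So the value $s$ is attainable exactly when $\max_{m+m'=s-1,\,m'<m}\bigl(m+T(m')\bigr)\geq p$.

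Since $T$ is convex, this maximum over splits $m+m'=s-1$ is attained at the most balanced split. For $s=2a$ that split is $(m,m')=(a,a-1)$, giving $T(a)=p+b\geq p$; hence $l(p)\leq 2a$.

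\textbf{Step 3: decide between $2a$ and $2a-1$.} For $s=2a-1$ the best split is $(a,a-2)$, which gives $a+T(a-2)=T(a)-(a-1)=p+b-(a-1)$. This is $\geq p$ if and only if $b=a-1$.

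For $s\leq 2a-2$ the balanced split gives at most $T(a-1)=p+b-a<p$, because $T(a-1)<p$ by the definition of $a$ as $\lceil\sqrt{2p}-\tfrac12\rceil$.

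I must also handle the degenerate case $r=1$, where the value is $p+1$. This never beats the bounds above except in the tiny cases. For $a=1$ we have $p=1$ and $l=2=2a$; here $b=a-1=0$ but $2a-1$ is not attained, because $r=1$ forces the value $p+1=2$. That explains the exception in the statement.

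\textbf{Main obstacle.} I expect the delicate point to be the bookkeeping in Step 1, namely the exact formula for $\mu_2$, and the small-$p$ edge cases ($a=1,2$, and $r=1$ versus $r\geq 2$). The optimization itself reduces to the monotonicity and convexity of $T$. I will cross-check small values directly, for instance $p=1,2,3$ giving $l=2,3,4$.
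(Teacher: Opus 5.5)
Your argument is correct, but it takes a different route from the paper's.

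The paper stays with Young diagrams. It transposes the Lagrangian computation to get the minimal first row $\mu_1=a$. It then views a minimal diagram as the $(a+1)\times a$ rectangle with $2b$ boxes removed inside $Q_{-1}(2p)$, and reads off $\mu_2$: it is forced to equal $\mu_1$ when $b\le a-2$, and equals $\mu_1-1$ for the hook-removed diagram $(a,(a-1)^{a-1},1)$ when $b=a-1$.

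You instead use Frobenius coordinates to identify $Q_{-1}(2p)$ with strict partitions of $p$, so that $\mu_1+\mu_2=b_1+b_2+1$. Everything then reduces to the arithmetic criterion
\[
l(p)\le s \iff \max_{m+m'=s-1,\ m'<m}\bigl(m+T(m')\bigr)\ge p .
\]

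The two routes buy different things:
\begin{itemize}
\item The paper's version is shorter, reuses the Lagrangian case, and exhibits the extremal diagrams directly.
\item Yours gives a genuine lower bound over all of $Q_{-1}(2p)$. In particular it rules out diagrams with $\mu_1>a$ but small $\mu_2$, which the paper's sketch discards implicitly by fixing $\mu_1=a$ first.
\item Yours also absorbs the $r=1$ case uniformly via $m'=0$, with $a=1$ being precisely the case where no admissible split of $s-1=0$ exists.
\end{itemize}

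A few points to tighten when you write it up:
\begin{itemize}
\item Step 2 should say ``$l(p)\le s$ exactly when\dots'' rather than ``the value $s$ is attainable exactly when\dots''.
\item The balanced split is optimal because of monotonicity, not convexity alone. With $f(m')=(s-1-m')+T(m')$ one has $f(m'+1)-f(m')=m'\ge 0$.
\item The realizability claim is cleanest stated directly. Single parts give every total in $[1,m]$, and $m$ plus a subset of $\{1,\dots,m'\}$ gives every total in $[m,m+T(m')]$. This is more precise than ``lowering parts from the bottom''.
\item The row bound $\mu_1'=b_1+1\le n$ is harmless, and you can say why concretely. Your witnesses have $b_1\le a$. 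Since $p\le \dim X=T(n-1)$ and $a$ is the least integer with $T(a)\ge p$, you get $a\le n-1$.
\end{itemize}
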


\begin{proof}
The proof is completely analogous to Proposition \ref{prop:min-deg-lagr} as the partitions in $Q_{-1}(2p)$ are dual to those in $Q_{1}(2p)$. The diagrams are transposed along the main diagonal. The minimum $\mu_1$ is $\mu_1 = a:= \ceil*{\sqrt{2p} - \frac{1}{2}}$. To get $l(p) = \mu_1 + \mu_2$ we need to estimate $\mu_2$.

Note that $2p = a(a+1) - 2b$, with $0\leq b < a$. Hence, a minimal partition is obtained from a rectangle of size $(a+1) \times a$ by removing  $2b$ boxes, respecting $Q_{-1}(2p)$. If $a=1$ then $b=0$, hence $l(1) = 2$. Next suppose that $a\geq 2$. If $b \leq a-2$, then $\mu_2 = \mu_1$ for any partition. If $b=a-1$ then one can remove boxes forming a hook, resulting in $\mu = (a,(a-1)^{a-1},1)$, where $\mu_2 = \mu_1-1$.
\end{proof}

\begin{remark}\label{rem:rect-spin}
Note that the partition $\mu$ is rectangular if and only if $2p = a(a+1)$, for $a = {\sqrt{2p} -\frac{1}{2}}$. In this case, $H^0(\Omega_X^p(l(p))) = V_{a\lambda_{n-a-1}}$. Indeed, a rectangular partition in $Q_{-1}(2p)$ is automatically of the form $a \times (a+1)$; moreover, for such $p$ this is the only partition in $Q_{-1}(p)$ contained in the rectangle of minimal perimeter.
\end{remark}

\medskip
\paragraph{Cayley plane} Let $X = E_6/P_1$ be the Cayley plane. Then $\Omega_X^1 = E_{-2\lambda_1+\lambda_3}$. Taking exterior powers, one gets $\Omega_X^p = \bigoplus_{i=1}^3 E_{\beta_{p,i}}$ where $\beta_{p,i}$ are given in Table \ref{tab:cohE6},  To compute the entries of this table, first analyze the decompositions, noting that the semisimple part of $P_1$ is of type $D_5$. Then we apply Lemma \ref{lem:compute a}. We used the software Lie \cite{lie} to carry out these computations. 


\begin{table}[ht]
\centering
\small
\centering
\begin{tabular}{c*{3}{>{\centering\arraybackslash}p{3.2cm}}|c}
\toprule
$p\backslash i$ & 1 & 2 & 3 & $l(p)$\\
\midrule
1  & $-2\lambda_1+\lambda_3$ & -- & -- &$2$ \\
2  & $-3\lambda_1+\lambda_4$ & -- & -- & $3$ \\
3  & $-4\lambda_1+\lambda_2+\lambda_5$ & -- & --& $4$\\
4  & $-5\lambda_1+2\lambda_2+\lambda_6$ & $-5\lambda_1+2\lambda_5$ & -- &$5$\\
5  & $-6\lambda_1+3\lambda_2$ & $-6\lambda_1+\lambda_2+\lambda_5+\lambda_6$ & -- &$6$\\
6  & $-7\lambda_1+2\lambda_2+\lambda_5$ & $-7\lambda_1+\lambda_4+2\lambda_6$ & -- &$7$\\
7  & $-8\lambda_1+\lambda_2+\lambda_4+\lambda_6$ & $-8\lambda_1+\lambda_3+3\lambda_6$ & -- &$8$\\
8  & $-9\lambda_1+2\lambda_4$ & $-9\lambda_1+\lambda_2+\lambda_3+6\lambda_6$ & $-8\lambda_1+4\lambda_6$ &$8$\\
9  & $-10\lambda_1+\lambda_3+\lambda_4+\lambda_6$ & $-9\lambda_1+\lambda_2+3\lambda_6$ & -- &$9$\\
10 & $-10\lambda_1+\lambda_4+2\lambda_6$ & $-11\lambda_1+2\lambda_3+\lambda_5$ & -- &$10$\\
11 & $-12\lambda_1+3\lambda_3$ & $-11\lambda_1+\lambda_3+\lambda_5+\lambda_6$ & -- &$11$\\
12 & $-12\lambda_1+2\lambda_3+\lambda_6$ & $-11\lambda_1+2\lambda_5$ & -- & $11$\\
13 & $-12\lambda_1+\lambda_3+\lambda_5$ & -- & -- &$12$\\
14 & $-12\lambda_1+\lambda_4$ & -- & --&$12$ \\
15 & $-12\lambda_1+\lambda_2$ & -- & -- &$12$\\
\bottomrule
\end{tabular}

\caption{Weights $\beta_{p,i}$ decomposing $\Omega_X^p$, for $X$ the Cayley plane, and $l(p)= \min \{\,l \in \ZZ \mid H^0(\Omega_X^p(l))\neq 0 \,\}$. }
\label{tab:cohE6}
\end{table}


\medskip
\paragraph{Freudenthal variety} Let $X=E_7/P_7$ be the Freudenthal variety. Then $\Omega_X^1 = E_{-2\lambda_7+\lambda_6}$. As for the Cayley plane, we write $\Omega_X^p = \bigoplus_{i=1}^3 E_{\beta_{p,i}}$ where $\beta_{p,i}$ and determine $\beta_{p,i}$ explicitly in Table \ref{tab:cohE7}.

\begin{table}[ht]
\centering
\small
\centering
\begin{tabular}{c*{3}{>{\centering\arraybackslash}p{4cm}}|c}
\toprule
$p\backslash i$ & 1 & 2 & 3 & $l(p)$\\
\midrule
1  & $-2\lambda_7 +\lambda_6$ & -- & -- &$2$ \\
2  & $-3\lambda_7 +\lambda_5$ & -- & -- &$3$ \\
3  & $-4\lambda_7 +\lambda_4$ & -- & -- &$4$ \\
4  & $-5\lambda_7 +\lambda_2+\lambda_3$ & -- & -- &$5$ \\
5  & $-6\lambda_7+2\lambda_3$ & $-6\lambda_7+\lambda_1+2\lambda_2$ & -- & 6 \\
6  & $-7\lambda_7+3\lambda_2$ & $-7\lambda_7+\lambda_1+\lambda_2+\lambda_3$ & -- & 7 \\
7  & $-8\lambda_7+2\lambda_2+\lambda_3$ & $-8\lambda_7+2\lambda_1+\lambda_4$ & -- & 8 \\
8  & $-9\lambda_7+\lambda_1+\lambda_2+\lambda_4$ & $-9\lambda_7+3\lambda_1+\lambda_5$ & -- & 9 \\
9  & $-10\lambda_7+2\lambda_4$ & $-10\lambda_7+2\lambda_1+\lambda_2+\lambda_5$ & $-10\lambda_7+4\lambda_1+\lambda_6$ & 10 \\
10  & $-11\lambda_7+\lambda_1+\lambda_4+\lambda_5$ & $-11\lambda_7+3\lambda_1+\lambda_2+\lambda_6$ & $-10\lambda_7+5\lambda_1$ & 10 \\
11  & $-12\lambda_7+\lambda_3+2\lambda_5$ & $-12\lambda_7+2\lambda_1+\lambda_4+\lambda_6$ & $-11\lambda_7+4\lambda_1+\lambda_2$ & 11 \\
12  & $-13\lambda_7+3\lambda_5$ & $-13\lambda_7+\lambda_1+\lambda_3+\lambda_5+\lambda_6$ & $-12\lambda_7+3\lambda_1+\lambda_4$ & 12 \\
13  & $-14\lambda_7+2\lambda_3+2\lambda_6$ & $-14\lambda_7+\lambda_1+2\lambda_5+\lambda_6$ & $-13\lambda_7+2\lambda_1+\lambda_3+\lambda_5$ & 13 \\
14  & $-15\lambda_7+\lambda_3+\lambda_5+2\lambda_6$ & $-14\lambda_7+\lambda_1+2\lambda_3+\lambda_6$ & $-14\lambda_7+2\lambda_1+2\lambda_5$ & 14 \\
15  & $-16\lambda_7+\lambda_4+3\lambda_6$ & $-14\lambda_7+3\lambda_3$ & $-15\lambda_7+\lambda_1+\lambda_3+\lambda_5+\lambda_6$ & 14 \\
16  & $-15\lambda_7+2\lambda_3+\lambda_5$ & $-17\lambda_7+\lambda_2+4\lambda_6$ & $-16\lambda_7+\lambda_1+\lambda_4+2\lambda_6$ & 15 \\
17  & $-18\lambda_7+5\lambda_6$ & $-16\lambda_7+\lambda_3+\lambda_4+\lambda_6$ & $-17\lambda_7+\lambda_1+\lambda_2+3\lambda_6$ & 16 \\
18  & $-16\lambda_7+2\lambda_4$ & $-17\lambda_7+\lambda_2+\lambda_3+2\lambda_6$ & $-18\lambda_7+\lambda_1+4\lambda_6$ & 16 \\
19  & $-18\lambda_7+\lambda_3+3\lambda_6$ & $-17\lambda_7+\lambda_2+\lambda_4+\lambda_6$ & -- & 17 \\
20  & $-18\lambda_7+\lambda_4+2\lambda_6$ & $-17\lambda_7+2\lambda_2+\lambda_5$ & -- & 17 \\
21  & $-18\lambda_7+\lambda_2+\lambda_5+\lambda_6$ & $-17\lambda_7+3\lambda_2$ & -- & 17 \\
22  & $-18\lambda_7+2\lambda_5$ & $-18\lambda_7+2\lambda_2+\lambda_6$ & -- & 18 \\
23  & $-18\lambda_7+\lambda_2+\lambda_5$ & -- & -- & 18 \\
24  & $-18\lambda_7+\lambda_4$ & -- & -- & 18 \\
25  & $-18\lambda_7+\lambda_3$ & -- & -- & 18 \\
26  & $-18\lambda_7+\lambda_1$ & -- & -- & 18 \\
\bottomrule
\end{tabular}

\medskip
\caption{Weights $\beta_{p,i}$ decomposing $\Omega_X^p$, for $X$ the Freudenthal Variety, and $l(p)= \min \{\,l \in \ZZ \mid H^0(\Omega_X^p(l))\neq 0 \,\}$.}
\label{tab:cohE7}
\end{table}

\medskip
\paragraph{Quadrics} The case of the $n$-dimensional quadric $Q^n \subset \p{n+1}$ is easier when regarded as a hypersurface rather than a quotient of the orthogonal group $\SO(n+2)$. Considering the conormal sequence, one proves that $H^0(\Omega_{Q^n}^p(p)) = 0$, for every $1 \leq p \leq  n-1$, see \cite[Lemma 5.3]{ACM:FanoDist}. Moreover, $\Omega_{Q^n}^p(p+1)$ is globally generated, since it is a quotient of $\Omega_{\p{n+1}}^p(p+1)$. 

\medskip
The preceding results imply the following corollary.

\begin{corollary}\label{cor:nonvanishing} Let $X$ be a cominuscule Grassmannian. 
\begin{enumerate} 
\item If $H^0(\Omega_X^p(2)) \neq 0$ then $p=1$.  
\item If $H^0(\Omega_X^p(3)) \neq 0$ then $p\leq 2$, unless $X$ is a Lagrangian Grassmannian, and $p=3$. \end{enumerate} 
\end{corollary}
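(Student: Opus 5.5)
Since $l(p)$ is nondecreasing in $p$ whenever it is read off from minimal partitions, and $H^0(\Omega^p_X(l))\neq 0$ forces $l\geq l(p)$ (multiplying by sections of $\ox(1)$ only increases the twist), the plan is to check case by case which $p\geq 1$ have $l(p)\leq 2$, respectively $l(p)\leq 3$. We go through the list of cominuscule Grassmannians in Table \ref{tab:comin}, using the formulas for $l(p)$ proved above.

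\textbf{Classical Grassmannians} $G(k,n)$, $n\geq 2k$. For $p\leq k^2$, Proposition \ref{prop:min-deg-grass} gives $l(p)=\lceil 2\sqrt p\rceil$. This is $2$ only for $p=1$, and it is $\leq 3$ only for $p\leq 2$, since $2\sqrt3>3$. For $p>k^2$ we have $l(p)=k+\lceil p/k\rceil\geq k+k+1\geq 3$. Equality $l(p)=3$ there would need $k=1$ and $p=2>1$, which is the projective space $\p{n-1}$. In that case the formula still gives $l(2)=3$ and $l(p)=p+1$, so $p\leq 2$ holds anyway.

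\textbf{Lagrangian Grassmannians} $IG(n,2n)$. By Proposition \ref{prop:min-deg-lagr}, $l(p)=\lceil\sqrt{2p}+1/2\rceil$, which is nondecreasing in $p$. We have $l(1)=2$ and $l(2)=\lceil 2.5\rceil=3$, while $l(3)=\lceil\sqrt6+0.5\rceil=3$ because $\sqrt6\approx2.449$. Finally $l(4)=\lceil 3.33\rceil=4$. So (i) holds, and (ii) fails exactly by the allowed exception $p=3$.

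\textbf{Spinor varieties} $OG(n,2n)$. Apply Proposition \ref{prop:min-deg-spinor}.
\begin{itemize}
\item For $p=1$: $a=1$, so $l=2$.
\item For $p=2$: $a=2$ and $b=1=a-1$, so $l=3$.
\item For $p=3$: $a=2$ and $b=0$, so $l=4$.
\item For $p\geq 4$: $a\geq 3$, so $l\geq 2a-1\geq 5$.
\end{itemize}

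\textbf{Exceptional cases and quadrics.} For the Cayley plane and the Freudenthal variety, the last columns of Tables \ref{tab:cohE6} and \ref{tab:cohE7} give $l(1)=2$, $l(2)=3$, and $l(p)\geq 4$ for $p\geq3$. For quadrics $Q^n$, the vanishing $H^0(\Omega^p_{Q^n}(p))=0$ gives $l(p)\geq p+1$ for $1\leq p\leq n-1$. Hence $l(p)\leq 2$ forces $p=1$ and $l(p)\leq 3$ forces $p\leq 2$. For $p=n$, $\Omega^n=\OO(-n)$ needs twist $n\geq p+1$ anyway.

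The one point needing care is the monotonicity claim: we need $H^0(\Omega^p(l))=0$ for every $l<l(p)$, and not just at $l(p)-1$. This is immediate from the definition of $l(p)$ as a minimum together with the multiplication remark above, so no step is a real obstacle. The main risk is arithmetic slips in the small cases of the Lagrangian and spinor formulas.
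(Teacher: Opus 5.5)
Your approach is the paper's own: the paper gives no separate argument and reads the corollary off the computed values of $l(p)$, type by type. Your arithmetic for $G(k,n)$, $IG(n,2n)$, $OG(n,2n)$ and the two exceptional varieties is correct.

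There is, however, one false step, in the quadric paragraph. For $p=n$ you write that $\Omega^n_{Q^n}=\OO(-n)$ ``needs twist $n\geq p+1$''. But here $p=n$, so the minimal twist is $l(n)=n=p$, not $p+1$. Concretely, $H^0(\Omega^3_{Q^3}(3))=H^0(\OO_{Q^3})\neq 0$. So, as written, your quadric paragraph proves something false for $Q^3$. It also contradicts your own Lagrangian computation $l(3)=3$, since $Q^3\cong IG(2,4)$.

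The repair is short, using $l(n)=n$ for $p=n$:
\begin{itemize}
\item $l(n)\leq 2$ forces $n\leq 2$. For $n=1$ this gives $p=1$, and $Q^2\cong\p1\times\p1$ has Picard rank two, so it is not a cominuscule Grassmannian.
\item $l(n)\leq 3$ with $p=n\geq 3$ forces $n=3$, i.e.\ $X=Q^3\cong IG(2,4)$. This is exactly the allowed Lagrangian exception with $p=3$.
\item For $n\geq 4$ one gets $l(n)=n\geq 4$, as required.
\end{itemize}

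Two minor remarks. The exceptional tables stop at $p=\dim X-1$, but the top degree is trivial, since $l(\dim X)=c_1(\tX)\in\{12,18\}$. Also, $H^0(\Omega^p_X(l))\neq 0\Rightarrow l\geq l(p)$ is just the definition of $l(p)$. The multiplication-by-sections argument is needed only in the quadric case, to pass from $H^0(\Omega^p_{Q^n}(p))=0$ to vanishing for all twists $\leq p$.
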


\section{Codimension-one foliations on cominuscule Grassmannians}\label{S:cod-one foliations}

In this section, we classify degree-zero codimension-one foliations on cominuscule Grassmannians and establish a structural theorem for degree-one codimension-one foliations on these varieties.

\medskip
\paragraph{Degree-zero foliations}
In \cite[Theorem B]{BFM}, it was proved that degree-0 foliations on certain cominuscule Grassmannians are necessarily pencils of hyperplane sections. More precisely, there is an isomorphism of schemes between the space of degree-0 foliations $\Fol (X,1,0)$ and the Grassmannian of pencils of hyperplane sections $G(2,H^0(\OO_X(1)))$, for $X$ in the following list: a quadric $Q^n\subset \p{n+1}$, $G(2,n)$, $G(3,6)$, $OG(4,8)$, $OG(5,10)$, $IG(3,6)$, the Cayley plane $E_6/P_1$, or the Freudenthal variety $E_7/P_7$. The proof relies entirely on representation-theoretic arguments for the restriction of foliations along a minimal equivariant embedding $X\hookrightarrow \PP(V_\lambda)$. Below, we combine the representation theory with stability results for foliations to complete the picture, at least set-theoretically.

\begin{theorem} \label{thm:deg-zero}
Let $X$ be a cominuscule Grassmannian and let $\sF$ be a codimension-one foliation of degree $0$ on $X$. Then $\sF$ is given by a pencil of hyperplane sections. In particular, set-theoretically,
\[
\Fol (X,1,0) \cong  G(2,H^0(\OO_X(1))).
\]
\end{theorem}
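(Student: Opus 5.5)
Let $\sF$ be a codimension-one foliation of degree $0$ on a cominuscule Grassmannian $X$. Then $c_1(\nF)=2$, so $\sF$ is defined by a twisted $1$-form $\omega\in H^0(\Omega^1_X(2))$. The plan is to show that $\sF$ is algebraically integrable with a rational first integral, and then that this first integral is a pencil of hyperplane sections.

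We apply Lemma~\ref{lem:integ-stab} with $q=1$ and $c=2$. First we check the positivity hypothesis. We have $c_1(\tF)=c_1(TX)-2$. From Table~\ref{tab:comin}, the index $c_1(TX)$ is at least $3$ for every cominuscule Grassmannian of dimension at least $3$: for $IG(2,4)\cong Q^3$ it equals $3$, and in all other cases it is larger. Hence $\mu(\tF)>0$. Surfaces, namely $\p2$ and $Q^2$, are excluded or treated directly, since $Q^2$ has Picard rank two and does not occur in our list.

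Next we check the hypothesis on higher-codimension forms. We need $H^0(\Omega^p_X(2))$ to contain no integrable forms for $1<p<\dim X$. By Corollary~\ref{cor:nonvanishing}(i), $H^0(\Omega^p_X(2))=0$ for every $p\ge2$, so this holds trivially.

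Since cominuscule Grassmannians are simply connected, Lemma~\ref{lem:integ-stab} now gives a rational first integral $(f^a:g^b)\colon X\dashrightarrow\p1$. Here $f\in H^0(\ox(l))$ and $g\in H^0(\ox(m))$, with $\gcd(a,b)=1$, $al=bm$ and $l+m=2$. Both $l$ and $m$ are positive, since otherwise one of $f,g$ is constant and the map does not define a foliation. Therefore $l=m=1$ and $a=b=1$, so $\sF$ is given by the fibers of $(f:g)$ with $f,g\in H^0(\ox(1))$ linearly independent. In other words, $\sF$ is the pencil of hyperplane sections spanned by $f$ and $g$, and $\omega=f\,dg-g\,df$ up to scaling.

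It remains to identify $\Fol(X,1,0)$ with $G(2,H^0(\OO_X(1)))$ as sets. Every pencil $\langle f,g\rangle$ gives the form $f\,dg-g\,df\in H^0(\Omega^1_X(2))$. This form is integrable, and its singular set, the base locus $\{f=g=0\}$ together with the critical points of the pencil, has codimension at least $2$; this holds because the minimal embedding is projectively normal and every hyperplane section is reduced and irreducible, by the Picard rank one property and nondegeneracy. The assignment $\langle f,g\rangle\mapsto[\omega]$ is injective, because the pencil is recovered from the leaves of $\sF$.

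The step most likely to need care is the application of Lemma~\ref{lem:integ-stab}. It relies on \cite[Prop.~7.5]{araujo-druel:fano-foliations} and on the fact that the inequality $\mu(\tF)>0$ holds uniformly across Table~\ref{tab:comin}. The low-dimensional quadric $Q^3$ is the borderline case, with $c_1(\tF)=1$, and it should still work. Once this is settled, Corollary~\ref{cor:nonvanishing} makes the remaining verification essentially automatic.
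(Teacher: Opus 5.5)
Your proposal is correct and follows the paper's proof: $c_1(TX)\geq 3$ gives $\mu(\tF)>0$, Corollary~\ref{cor:nonvanishing}(i) gives $H^0(\Omega^p_X(2))=0$ for $p\geq 2$, and Lemma~\ref{lem:integ-stab} together with simple connectedness yields the first integral. You also usefully make explicit that $l+m=2$ forces $l=m=1$ and $a=b=1$. For the converse direction, your codimension-two justification is loose; a cleaner reason that $f\,dg-g\,df$ has no divisorial zeros is that dividing by an equation of such a divisor would give a nonzero section of $\Omega^1_X(1)$, which vanishes since $l(1)=2$ for every cominuscule Grassmannian (projective normality is not needed).
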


\begin{proof}
Note that $c_1(TX) \geq 3$ for any cominuscule Grassmannian. By Corollary \ref{cor:nonvanishing}, we have $H^0(\Omega_X^p(2)) = 0$ for $p \geq 2$. The result then follows from Lemma \ref{lem:integ-stab}; note that $X$ is Fano, thus simply connected.
\end{proof}

\begin{remark}
The theorem above proves that the space $\Fol (X,2)$ of degree-0 foliations on $X$ is set-theoretically a Grassmannian $G(2,H^0(\OO_X(1)))$. In \cite{BFM}, this identification is proved scheme-theoretically. In \cite{Kuster}, the second author proved that for $X$ an adjoint variety not of type A or C, $\Fol (X,2) \cong  G(2,H^0(\OO_X(1)))$ set-theoretically. This holds in particular for $X= OG(2,n)$. However, in \cite[Proposition 6.3]{BFM} it was proved that, scheme theoretically, we have a strict inclusion $\Fol (X,2) \subsetneq  G(2,H^0(\OO_X(1)))$ for $X= OG(2,n)$. One interesting question is to describe this non-reduced scheme structure. 
\end{remark}

\medskip 
\paragraph{Degree-one foliations}
From the techniques exposed so far, we derive the following structure theorem for the space $\Fol (X,3)$ of degree-1 foliations on $X$, a cominuscule Grassmannian that is neither a three-dimensional quadric nor a Lagrangian Grassmannian. Note that $Q^3 \cong IG(2,4)$.

\begin{theorem}\label{thm:deg-one}
Let $X$ be a cominuscule Grassmannian, except $IG(n,2n)$, $n\geq 2$. Let $\sF$ be a codimension-one foliation of degree one on $X$. Then either
\begin{enumerate}
    \item $\sF$ is logarithmic of type $(1,2)$, i.e., there exist $f^2,g\in H^0(\ox(2))$ such that $\sF$ is given by the fibers of $(f^2:g) \colon X \dashrightarrow \p1$; or
    \item $\sF = \phi^*\sG$ is the pullback of a foliation $\sG$ on a surface $S$. Moreover, the fibers of $\phi$ are rationally connected and define a degree-zero foliation with semistable tangent sheaf. 
\end{enumerate}    
\end{theorem}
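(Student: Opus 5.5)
A degree-one codimension-one foliation $\sF$ has $c_1(\nF)=3$. Our plan is to use Corollary \ref{cor:nonvanishing} to control the possible subfoliations, and then apply the dichotomy of \cite[Prop. 7.5]{araujo-druel:fano-foliations} as in Lemma \ref{lem:integ-stab}.

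\textbf{Step 1: slope of $\tF$.} Since $\mu(\tF)=(c_1(TX)-3)/(\dim X-1)$, we need $c_1(TX)>3$. Excluding $IG(n,2n)$ removes $Q^3$ and every case with $c_1(TX)=3$, so $\mu(\tF)>0$. (Note $G(2,3)=\p2$ is not considered, or is handled as in the classical case.)

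\textbf{Step 2: case where $\tF$ is semistable.} Then \cite[Prop. 7.5]{araujo-druel:fano-foliations} shows that $\sF$ is algebraically integrable with rationally connected leaves. Since $X$ is simply connected, \cite[Prop. 3.5]{LPT13} gives a rational first integral $(f^a:g^b)$ as in Lemma \ref{lem:integ-stab}, with $l+m=3$ and $al=bm$. Up to swapping, $l=1$, $m=2$, $a=2$, $b=1$. This is case (i). The case $(l,m)=(0,3)$ is impossible because it does not give a dominant map.

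\textbf{Step 3: case where $\tF$ is not semistable.} Then there is a subfoliation $\sG\subset\sF$, given by the maximal destabilizing subsheaf, with $c_1(\nG)\le 3$. It is defined by an integrable form in $H^0(\Omega^p_X(c_1(\nG)))$ with $p=\codim\sG\ge 2$. By Corollary \ref{cor:nonvanishing}(ii) (here the exclusion of the Lagrangian Grassmannian is essential), $p=2$ and $c_1(\nG)=3$, since $H^0(\Omega^2_X(2))=0$. So $\sG$ has codimension $2$ and degree $0$. Moreover $\tG$ is the maximal destabilizing subsheaf, hence semistable with $\mu(\tG)>\mu(\tF)>0$. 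Applying \cite[Prop. 7.5]{araujo-druel:fano-foliations} to $\sG$ (or Lemma \ref{lem:integ-stab}, using that $H^0(\Omega^p_X(3))=0$ for $p>2$), $\sG$ is algebraically integrable with rationally connected leaves. Let $\phi\colon X\dashrightarrow S$ be the rational map onto the leaf space, so that $S$ is a surface.

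\textbf{Step 4: descending $\sF$.} Since $\tG\subset\tF$ and both are involutive, $\sF$ is a union of fibers of $\phi$. Hence $\sF=\phi^*\sH$ for a foliation $\sH$ on $S$: the leaves of $\sF$ project to curves on $S$, and the normal form of $\sF$ descends because the fibers are connected. This is case (ii).

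The main obstacle is Step 3. One must check that the destabilizing subfoliation has normal first Chern class exactly $3$, and that its tangent sheaf is semistable. If the Harder–Narasimhan filtration has further pieces, we would instead take its first piece and argue again via the vanishing $H^0(\Omega^p_X(l))=0$ for $p\ge 3$, $l\le 3$. A second point to verify is that the descent in Step 4 works when $S$ is merely a normal surface.
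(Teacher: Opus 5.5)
Your overall architecture is the paper's: positivity of $\mu(\tF)$, the Araujo--Druel dichotomy, \cite[Prop. 3.5]{LPT13} in the semistable case, Corollary~\ref{cor:nonvanishing} to force a codimension-two subfoliation, then descent to a surface. Steps 1, 2 and 4 are fine, and Step 2 even spells out the arithmetic $(l,m)=(1,2)$, $(a,b)=(2,1)$ that the paper leaves implicit. The problem is Step 3.

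You take $\tG$ to be the maximal destabilizing subsheaf of $\tF$. That gives $\mu(\tG)>\mu(\tF)$. Since $\rk\tG<\rk\tF$, this does \emph{not} imply $c_1(\tG)\ge c_1(\tF)$, i.e.\ $c_1(\nG)\le 3$, and that inequality is exactly what you need to invoke Corollary~\ref{cor:nonvanishing}. Conversely, the subsheaf for which $c_1\ge c_1(\tF)$ is automatic is the last positive-slope piece of the Harder--Narasimhan filtration, essentially what \cite[Prop. 7.5]{araujo-druel:fano-foliations} supplies. That subsheaf is not automatically semistable. So you are taking ``$c_1(\nG)\le 3$'' and ``semistable'' from two different objects. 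Your suggested remedy (``take its first piece'') is circular, since the first piece of the filtration \emph{is} the maximal destabilizing subsheaf.

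The paper proceeds in the other order. It takes $\sH\subsetneq\sF$ from Araujo--Druel, algebraically integrable with rationally connected leaves and $c_1(\tH)\ge c_1(\tF)$. So $\sH$ is defined by a form in $H^0(\Omega^p_X(k))$ with $k\le 3$, and Corollary~\ref{cor:nonvanishing} forces $p=2$ and $k=3$. Only afterwards does it deduce that $\tH$ is semistable, by running the same dichotomy on $\sH$ as in Lemma~\ref{lem:integ-stab} (with $q=2$, $c=3$). A non-semistable $\tH$ would produce a proper subfoliation of $\sH$ of codimension $\ge 3$ whose normal sheaf has $c_1\le 3$. The vanishing $H^0(\Omega^p_X(3))=0$ for $p\ge 3$ rules this out.

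A posteriori this $\tH$ is indeed the maximal destabilizing subsheaf of $\tF$: it is semistable of positive slope, and its quotient in $\tF$ has rank one and $c_1\le 0$. So your assertion is true, but it is the output of this two-step argument, not something you may assume at the start.
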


\begin{proof}
First, $\sF$ is given by $\omega \in H^0(\Omega_X^1(3))$. If $X\neq Q^3 = IG(2,4)$, then $c_1(\tF) = c_1(TX) -3\geq 1$. By \cite[Prop. 7.5]{araujo-druel:fano-foliations} and \cite[Prop. 3.5]{LPT13}, either $\tF$ is semistable and $\sF$ is given by a rational map $(f^2:g) \colon X \dashrightarrow \p1$, with $f^2,g\in H^0(\ox(2))$, or there exists $\sH \subset \sF$ algebraically integrable with rationally-connected leaves such that $c_1(\tH) \geq c_1(\tF)$. In particular, $\sH$ is given by some $\eta \in H^0(\Omega_X^p(k))$ with $k\leq 3$. By Corollary \ref{cor:nonvanishing}, if $X \neq IG(n,2n)$ then $H^0(\Omega_X^p(3)) = 0$ for $p\geq 3$. Therefore, $\tH$ is semistable. The family of leaves of $\sH$ defines a rational map $\phi\colon X \dashrightarrow S$ to a surface $S$ with connected general fiber. It follows that $\sF = \phi^*\sG$ for some foliation $\sG$ on $S$.  
\end{proof}

We expect that the rational map in the second item is the composition of a minimal embedding and a linear projection. This holds if the foliation extends to the ambient projective space of a minimal embedding. However, a proof or a counterexample eludes us at the moment. See \cite{folCompInt} for similar results on complete intersections.

\section{Minimal-degree foliations on homogeneous spaces} \label{S: Examples}
In this section, we present foliations of minimal degree on the classical Grassmannians, the Lagrangian Grassmannians,  the orthogonal Grassmannians, and the Cayley plane $\mathbb{OP}^2$. The main idea is the following. For $X =G/P$, $G$ acts naturally on  $\PP H^0(\Omega_{X}^p(l(p)))$, and $\Fol(X, p, l(p)-p-1)$ is a $G$-invariant subscheme, which is closed since $l(p)$ is the minimum twist to afford global sections. Then it must contain some minimal $G$-orbit, if not empty. We will construct examples of foliations associated with points of a minimal orbit. The natural question that remains open is whether these are the only foliations of this degree.

\subsection{Foliations associated to retangular partitions on ordinary Grassmannians}

Below, we construct examples of foliations in $G(k,n)$ from elements in a minimal orbit.   We will always suppose that $k\leq h:=n-k$ and $V\cong \CC^n$. Recall that $l(p) = \min\{\, l \mid h^0(\Omega_{G(k,n)}^p(l))\neq 0\,\}$ and $d(p) = l(p) -p-1$. 

\begin{theorem}
\label{thm_min_rect}
Let $p=de$ for two integers $d,e$ such that $1\leq e \leq k$, $1\leq d\leq h:=n-k$. Let $V$ be an $n$-dimensional complex vector space. Then 
\[
\Flag(h-d,h+e,V)\subset \Fol(G(k,V),de, d+e - de -1).
\]
More explicitly, given $[\, W_{h-d}\subset W_{h+e} \subset V\,] \in \Flag(h-d,h+e,V)$ one defines a foliation $\sF$ by the fibers of a rational map $\varphi \colon G(k,V) \dashrightarrow G(e, W_{h+e}/W_{h-d})$. The tangent sheaf $\tF$ fits in the following exact sequence
\[
0 \lra W_{h-d} \otimes \U^\vee \lra \tF \lra (V/W_{h+e})^\vee \otimes (V/(\U + W_{h-d}))^{\vee\vee} \lra 0.
\]
In particular, $\sF$ is of minimal degree when $l(p) = d+e$. 
\end{theorem}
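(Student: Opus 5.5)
We construct $\varphi$ explicitly and then compute the determinant of its normal sheaf. Given the flag $W_{h-d}\subset W_{h+e}\subset V$, define
\[
\varphi\colon G(k,V)\dashrightarrow G(e,W_{h+e}/W_{h-d}),\qquad U\mapsto \bigl((U+W_{h-d})\cap W_{h+e}\bigr)/W_{h-d}.
\]
This is the composition of the projection $U\mapsto (U+W_{h-d})/W_{h-d}\in G(k,V/W_{h-d})$ with the map sending a $k$-dimensional subspace of $V/W_{h-d}$ (of dimension $n-h+d=k+d$) to its intersection with the codimension-$(k-e)$ subspace $W_{h+e}/W_{h-d}$. For general $U$ we have $U\cap W_{h-d}=0$, and the intersection has the expected dimension $k-(k-e)=e$. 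So $\varphi$ is a dominant rational map. Its target has dimension $e(d+e-e)=de=p$, and it is defined off a locus of codimension $\geq 2$.

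The fibres of $\varphi$ define an algebraically integrable foliation $\sF$ of codimension $p=de$. Integrability is automatic, and the saturated kernel of $d\varphi$ gives the tangent sheaf. To identify $\tF$ I would compute $d\varphi$ at a general $U$ using $T_UG(k,V)=\Hom(U,V/U)$. A tangent vector $\psi$ is killed by $d\varphi$ exactly when it moves $U$ within the fibre. The fibre contains the directions $\Hom(U,(W_{h-d}+U)/U)\cong W_{h-d}\otimes U^\vee$, since adding elements of $W_{h-d}$ does not change $U+W_{h-d}$; this gives the subbundle $W_{h-d}\otimes\U^\vee$. Modulo these, $\psi$ induces a map $U\to V/(U+W_{h-d})$. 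This map must preserve the $e$-dimensional image $(U+W_{h-d})\cap W_{h+e}$ modulo $W_{h-d}$, which means it vanishes on the preimage $U_e\subset U$ of that subspace. It therefore factors through $U/U_e$, a space of dimension $k-e$ that is naturally identified with $(V/W_{h+e})^{\vee\vee}$-type data. Writing this out yields the quotient term $(V/W_{h+e})^\vee\otimes(V/(\U+W_{h-d}))^{\vee\vee}$ and hence the stated exact sequence. The double dual is needed because $V/(\U+W_{h-d})$ fails to be locally free where $\U\cap W_{h-d}\neq 0$. As a check, the ranks add to $k(h-d)+(k-e)d=kh-de$.

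Next we compute $c_1(\tF)$ from the sequence. We have $c_1(W_{h-d}\otimes\U^\vee)=h-d$. The sheaf $V/(\U+W_{h-d})$ has rank $d$ and first Chern class $c_1(\Q)=1$, since $W_{h-d}$ is trivial and the degeneracy locus has codimension $\geq 2$. Tensoring its reflexive hull by the trivial $(V/W_{h+e})^\vee$ of rank $k-e$ gives $c_1=k-e$. So $c_1(\tF)=h-d+k-e=n-(d+e)$, and therefore $c_1(\nF)=d+e$. The degree is then $c_1(\nF)-p-1=d+e-de-1$, which shows $\sF\in\Fol(G(k,V),de,d+e-de-1)$. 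We could alternatively apply Lemma \ref{lem:normal-fibration}: $c_1(T G(e,d+e))=d+e$, and when $l(p)=d+e$ the ramification vanishes. The two computations are consistent.

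Finally, each flag gives a distinct foliation, because $W_{h-d}$ and $W_{h+e}$ are recovered from the fibres: $W_{h-d}$ as the common translation directions, and $W_{h+e}$ from the target. This makes the map from the flag variety injective, which justifies the inclusion. The minimality statement then follows immediately from the definition $d(p)=l(p)-p-1$. The main obstacle is the precise identification of the quotient term and the verification that the kernel is saturated with the claimed $c_1$. The issue is controlling the codimension of the indeterminacy and degeneracy loci, so that no divisorial correction (that is, ramification) appears. I would handle this by checking that $\{U\cap W_{h-d}\neq0\}$ and $\{\dim(U+W_{h-d})\cap W_{h+e}>h-d+e\}$ are Schubert varieties of codimension $\geq 2$; this holds when $d\le h$ and $e\le k$, apart from edge cases that need a separate check.
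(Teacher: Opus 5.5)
Your proposal is correct and takes essentially the same route as the paper: the same map (your $((U+W_{h-d})\cap W_{h+e})/W_{h-d}$ coincides with the paper's $((U\cap W_{h+e})+W_{h-d})/W_{h-d}$ by the modular law), the same description of $\ker D\varphi$ as the maps $f$ with $f(U\cap W_{h+e})\subset U+W_{h-d}$, fitting in an extension with sub $\Hom(U,W_{h-d})$, the same Schubert codimensions $d+1$ and $e+1$ for the base locus, and the same first Chern class count giving $c_1(\nF)=d+e$. Two small tidy-ups: the quotient identification is simply $U/(U\cap W_{h+e})\cong V/W_{h+e}$ (because $U+W_{h+e}=V$ off the base locus), and your kernel computation is valid at every point off the loci $\{U\cap W_{h-d}\neq 0\}$ and $\{\dim(U\cap W_{h+e})>e\}$, not only at a general point, which is exactly what rules out a ramification divisor, so no edge cases remain since $d,e\geq 1$.
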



\begin{proof}
Fix a flag $[\, W_{h-d} \subset W_{h+e} \subset V \,] \in \Flag(h-d,\, h+e;\, V)$ and consider the rational map
\[
\varphi \colon G(k,V) \dlra G\!\left(e,\, W_{h+e}/W_{h-d}\right),
\]
defined as follows.  Let $\pi \colon W_{h+e} \to W_{h+e}/W_{h-d}$ denote the natural projection. For a general point $U_k \in G(k,V)$, $\dim(U_k\cap W_{h+e}) = e$ and $U_k\cap W_{h-d} = 0$. Then  
\[
\varphi(U_k) = \pi\bigl(U_k \cap W_{h+e}\bigr)
=
\frac{(U_k \cap W_{h+e}) + W_{h-d}}{W_{h-d}}
\]
is well defined. The base locus of $\varphi$ is $\Lambda \cup \Sigma$, where $\Lambda = \{\, U_k \mid \dim(U_k\cap W_{h-d})\geq 1 \,\}$ and $\Sigma = \{\, U_k \mid \dim(U_k\cap W_{h+e})\geq e+1 \,\}$. Note that 
$\Lambda$ and $\Sigma$ represent the Schubert cycles $\sigma_{(d+1)}$ and $\sigma_{(1^{e+1})}$, respectively. Hence,  $\codim \Lambda = d+1$ and $\codim \Sigma = e+1$. For $U_k \in G(k,V) \setminus \{\Lambda \cup \Sigma\}$, the derivative 
\[
D\varphi_{u_k} \colon \Hom(U_k, V/U_k) \lra  \Hom(\varphi(U_k) , W_{h+e}/W_{k-d}/\varphi(U_k)), 
\]
is defined as follows. Note that $(W_{h+e}/W_{k-d})/\varphi(U_k) \cong V/(U_k+W_{h-d})$ and $U_k \cap W_{h-d} =0$. Then let $\rho \colon V/U_k \to V/(U_k+W_{h-d})$ be the canonical projection and $\iota \colon U_k\cap W_{h+e} \to U_k$ be the inclusion. Thus, for $f\in \Hom(U_k, V/U_k)$ we have $D\varphi_{U_k}(f) = \rho\circ f \circ \iota$. Therefore,
\[
\ker D\varphi_{U_k} = \{\, f\colon U_k \to V/U_k \mid f(U_k\cap W_{h+e})\subset U_k+W_{h-d} \,\}. 
\]
Any linear map $\bar{f} \colon V/W_{h+e} \cong {U_k}/({U_k\cap W_{h+e}})  \to V/(U_k+W_{h-d})$ admits a lift $f\in \ker D\varphi_{U_k}$. If $f'$ is another lift of $\bar{f}$, then $f-f' \in \Hom(U_k, W_{h-d})$. Thus, varying $U_k$, we get the short exact sequence over $B = G(k,V) \setminus \{\Lambda \cup \Sigma\}$: 

\[
0 \lra \inhom(\U, W_{h-d} )|_B \lra \ker D\varphi|_B \lra \inhom( V/W_{h+e} ,V/(\U+W_{h-d}))|_B \lra 0
\]
Let $j \colon B \to G(k,V)$ be the inclusion. Then $\tF = j_*\ker D\varphi|_B$ and we get the short exact sequence
\[
0 \lra \inhom(\U, W_{h-d} ) \lra \tF \lra \inhom( V/W_{h+e} ,(V/(\U+W_{h-d}))^{\vee\vee}) \lra 0
\]
Note that $U_k \oplus W_{h-d} \into V \onto V/(\U+W_{h-d})$. Hence $\rk \tF = kh-de$ and  $c_1(\tF) = (h-d) + (k-e) = n-d-e$, that is, $\sF \in \Fol(G(k,V),de, d+e - de -1)$.
\end{proof}

\begin{remark}
In \cite[Example 4.3]{araujo-druel:fano-foliations}, Araujo and Druel describe Fano foliations on $G(k,V)$ as follows. Let $V$ be a vector space of dimension $n$, and let $W \subset V$ be a subspace of dimension $m \leq n-k-1$. The natural projection $V \to V/W$ induces a rational map
\[
G(k,V) \dlra G(k,V/W).
\]
This map defines a foliation $\mathscr{F}$ on $G(k,V)$ of codimension $p = k(n-k-m)$ such that $c_1(\nF) = n-m$. This corresponds to making $e=k$ and $d= n-k-m$ in Theorem \ref{thm_min_rect}. This foliation is of minimum degree when $l(p) = n-m$. 
\end{remark}

\begin{remark}
Notice that in Araujo and Druel examples $\mathscr{F}=\cU^\vee \otimes W_{h-d}$ is automatically reflexive because it is locally free. Another case in which $\mathscr{F}$ is locally free is when $d=h$, in which case $\mathscr{F}=(V/W_{h+e})^\vee \otimes \cQ$. 
\end{remark}

\begin{remark}
The degree computation in the above theorem follows from Lemma \ref{lem:normal-fibration} and $\mathrm{SL}_n$-equivariance. Indeed, using the lemma, one shows that the degree of $\mathscr{F}$ equals $l=d+e-\delta$, where $\delta$ is the ramification divisor. Then, using the fact that the stabilizer of the flag $[W_{h-d}\subset W_{h+e}\subset V]$ permutes transitively all the fibers of $\varphi$, one deduces that there is no ramification in codimension one (i.e. $\delta=0$).

\end{remark}

\begin{Question}
Let $\mathscr{F}$ be a foliation on $X = G(k,V)$ arising from a rectangular partition, as in Theorem~\ref{thm_min_rect}. Suppose that $\mathscr{F}$ is induced by a global section of $H^0(X,\Omega_X^p(p+1))$. 
Can $\mathscr{F}$ be tangent to a codimension-$p$ foliation of degree zero on $X$? If so, is it possible that such a degree-zero foliation is induced by a linear projection $X \dashrightarrow \mathbb{P}^p$?
\end{Question}

\subsection{Minimal-degree foliations on symplectic/orthogonal Grassmannians}
The following result is analogous to Theorem \ref{thm_min_rect} for the orthogonal and symplectic cases. We define $l(p)$ and $d(p)$ as the minimal twist for which there are non-zero holomorphic $p$-forms on symplectic (respectively orthogonal) Grassmannians (as we have done for ordinary Grassmannians).

\begin{theorem}\label{thm:min-examples-orthosympl}
We have the following.
\begin{enumerate}
    \item If $V\cong \CC^{2n}$ is endowed with a symplectic form, and $2p = a(a+1)$, then $l(p) = a+1$, $d(p) =  -\frac{ a(a-1)}{2} $ and
    \[
   IG(n-a;\, V)\subset \Fol(IG(n,V), p, d(p)) \neq \emptyset.
    \]
    \item If $V\cong \CC^{2n}$ is endowed with a quadratic form, and $2p = a(a+1)$, then $l(p) = 2a$, $d(p) = -\frac{(a-1)(a-2)}{2}$ and
    \[
   OG(n-a-1;\, V)\subset \Fol(OG(n,V), p, d(p)) \neq \emptyset.
    \]
\end{enumerate}
\end{theorem}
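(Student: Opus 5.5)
The plan is to follow the strategy of Theorem \ref{thm_min_rect}: attach to each point of the indicated isotropic Grassmannian a dominant rational map onto a smaller Lagrangian (resp. spinor) Grassmannian of dimension exactly $p$. The foliation by its fibers is then automatically integrable with saturated tangent sheaf. Lemma \ref{lem:normal-fibration} then computes $c_1(\nF)$ and shows the ramification term vanishes.

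\emph{Numerics.} For (i), with $2p=a(a+1)$ we have $a<\sqrt{a(a+1)}<a+\tfrac12$. Hence Proposition \ref{prop:min-deg-lagr} gives
\[
l(p)=\ceil*{\sqrt{2p}+\tfrac12}=a+1,\qquad d(p)=a+1-p-1=-\tfrac{a(a-1)}{2}.
\]
For (ii), Proposition \ref{prop:min-deg-spinor} applies with $b=0$, so $l(p)=2a$. This holds for $a\ge 2$ since $b=0\le a-2$, and also for $a=1$ since then $l=2$. Therefore $d(p)=2a-p-1=-\tfrac{(a-1)(a-2)}{2}$.

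\emph{Construction in case (i).} Fix an isotropic $W\subset V$ with $\dim W=n-a$. Then $W^\perp/W\cong\CC^{2a}$ carries an induced symplectic form. For a general Lagrangian $U$ one has $U\cap W=0$ and $\dim(U\cap W^\perp)=a$, since $W^\perp$ has codimension $n-a$. Define
\[
\varphi\colon IG(n,V)\dashrightarrow IG(a,W^\perp/W),\qquad U\mapsto \frac{(U\cap W^\perp)+W}{W}.
\]
This subspace is isotropic of dimension $a$, hence Lagrangian. The map is dominant: given a Lagrangian $L\subset W^\perp/W$ with preimage $\tilde L$, any Lagrangian $U$ with $U\cap W^\perp$ a complement of $W$ in $\tilde L$ is mapped to $L$. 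The target has dimension $a(a+1)/2=p$ and $c_1=a+1$.

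\emph{Construction in case (ii).} Take instead $W$ isotropic with $\dim W=n-a-1$. Then $U\cap W^\perp$ generically has dimension $a+1$, and $\varphi$ maps to $OG(a+1,W^\perp/W)$. We must choose the component compatible with the family of $U$. This target has dimension $a(a+1)/2=p$ and $c_1=2(a+1)-2=2a$.

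\emph{Applying Lemma \ref{lem:normal-fibration}.} In both cases we need $\varphi^*\OO_Y(1)=\OO_X(1)$. The map is the restriction of the analogous linear-algebra map between ordinary Grassmannians, which pulls back Plücker $\OO(1)$ to Plücker $\OO(1)$. In the spinor case both Plücker bundles are twice the spinor generator, so the generators correspond. The lemma then gives $c_1(\nF)=c_1(TY)-\delta$, with $c_1(TY)$ equal to $a+1$, resp. $2a$. These are exactly $l(p)$ by the numerics above, so the second part of the lemma forces $\delta=0$ and $c_1(\nF)=l(p)$. (Alternatively, one could argue as in the remark after Theorem \ref{thm_min_rect}: the stabilizer of $W$ acts transitively on the fibers, so there is no codimension-one ramification.) Hence $\sF\in\Fol(X,p,d(p))$, and $W\mapsto\sF$ gives the claimed inclusion.

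\emph{Main obstacle.} The delicate step is the line-bundle compatibility $\varphi^*\OO_Y(1)=\OO_X(1)$, especially in the spinor case, where the relevant component of $OG(a+1,W^\perp/W)$ and the half-spin generators must be matched correctly. I would check it on a test curve: a line in $X$ through a general point, in the closure of the Schubert cell, should map to a line in $Y$. One must also verify that the indeterminacy locus has codimension $\ge 2$, so that the pullback is computed off a small set. This should follow from a Schubert-cell description analogous to $\Lambda\cup\Sigma$ in the proof of Theorem \ref{thm_min_rect}.
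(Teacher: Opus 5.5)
Your proposal is correct and is essentially the paper's proof: you use the same maps $U\mapsto ((U\cap W^\perp)+W)/W$ onto $IG(a,W^\perp/W)$, resp. $OG(a+1,W^\perp/W)$, and read off $c_1(\nF)$ from Lemma \ref{lem:normal-fibration}. Your added details are sound and make explicit what the paper only asserts: $\delta=0$ follows from the second part of that lemma because $c_1(TY)=l(p)$, and in the spinor case $\varphi^*\OO_Y(1)=\OO_X(1)$ follows because both Pl\"ucker bundles are twice the generator. The one point that neither you nor the paper spells out, injectivity of $W\mapsto\sF_W$, also holds: the line $[\omega_W]$ of the form defining $\sF_W$ is fixed by the parabolic subgroup stabilizing $W$, so it is the highest-weight line attached to $W$ in the irreducible space $H^0(\Omega_X^p(l(p)))$.
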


\begin{proof}
{\bf (Lagrangian Grassmannians)}:\label{ex:sympl} Suppose $2p = a(a+1)$. Then 
\[
H^0\!\left(IG(n,V), \Omega^{p}_{IG(n,V)}(a+1)\right) =V^\vee_{(a+1)\lambda_{n-a}} \cong  V_{(a+1)\lambda_{n-a}},
\]
where $V\cong \CC^{2n}$. Observe that there is a natural inclusion of the symplectic Grassmannian
\(
\mathrm{IG}(n-a;\, V)
\)
into the projective space $\mathbb{P}\!\left(V_{(a+1)\lambda_{n-a}}\right)$.
We claim that each point of this Grassmannian corresponds, under this inclusion, to a foliation on $IG(n,V)$.

Fix a point $[\, W_{n-a}\subset V \,] \in IG(n-a;\, V)$ and consider the rational map
\[
\varphi \colon IG(n,V) \dlra IG\!\left(a,\, W_{n-a}^{\perp}/W_{n-a}\right), \quad [U_n]\mapsto \left[\frac{(U_n\cap W_{n-a}^\perp)+ W_{n-a}}{W_{n-a}}\right].
\]
Here we have endowed the $2a$-dimensional space $W_{n-a}^{\perp}/W_{n-a}$ with the symplectic form which is the restriction of the symplectic form on $V$. Let $\sF$ be the foliation on $IG(n,V)$ induced by the rational map $\varphi$. The general fiber of $\varphi$ over $[A_{a}]\in IG(a,\, W_{n-a}^\perp/W_{n-a})$ can be described as in the proof of Theorem \ref{thm_min_rect}; it is birational to the Lagrangian Grassmannian bundle $ IG(n-a,\cU_{a}^\perp/\cU_a) $ over $G(a,B_{n})$, where $B_{n}:=A_{a}+W_{n-a}$. The base locus of $\varphi$ is where either $\dim(U_n \cap W_{n-a}^\perp) \geq a+1$ or $\dim(U_n\cap W_{n-a}) > 0$. As in the proof of Theorem \ref{thm_min_rect}, one considers an isotropic flag and computes Schubert cycles to determine that the base locus of $\varphi$ has codimension $a+1$. 
We observe that, by construction,
\(
\varphi^{*}\mathcal{O}_{IG(a,2a)}(1) \cong \mathcal{O}_{IG(n,V)}(1).
\)
Again, we have that the map $\varphi$ has no ramification divisor. We therefore conclude that $\sF$ has codimension $p$ and $c_1(\nF) = a+1$. The degree is $a+1 -p-1 = -\frac{a(a-1)}{2}$.
\medskip

\noindent {\bf (Spinor varieties)}:\label{ex:ortho} Suppose $2p = a(a+1)$. Then 
\[
H^0\!\left(OG(n,V), \Omega^{p}_{OG(n,V)}(2a)\right) =V^\vee_{a\lambda_{n-a-1}} \cong  V_{a\lambda_{n-a-1}},
\]
where $V\cong \CC^{2n}$. Observe that there is a natural inclusion of the orthogonal Grassmannian
\(
\mathrm{OG}(n-a-1;\, V)
\)
into the projective space $\mathbb{P}\!\left(V_{a\lambda_{n-a-1}}\right)$.
We claim that each point of this Grassmannian corresponds, under this inclusion, to a foliation on $OG(n,V)$.

Fix a point $[\, W_{n-a-1}\subset V \,] \in OG(n-a-1;\, V)$ and consider the rational map
\[
\varphi \colon OG(n,V) \dlra OG\!\left(a+1,\, W_{n-a-1}^{\perp}/W_{n-a-1}\right), \quad [U_n]\mapsto \left[\frac{(U_n\cap W_{n-a-1}^\perp)+ W_{n-a-1}}{W_{n-a-1}}\right].
\]
Here we have endowed the $2a+2$-dimensional space $W_{n-a-1}^{\perp}/W_{n-a-1}$ with the quadratic form which is the restriction of the quadratic form on $V$. Let $\sF$ be the foliation on $OG(n,V)$ induced by the rational map $\varphi$. The general fiber of $\varphi$ over $[A_{a+1}]\in OG(a+1,\, W_{n-a-1}^\perp/W_{n-a-1})$ can be described as in the proof of Theorem \ref{thm_min_rect}; it is birational to the Spinor Grassmannian bundle $ OG(n-a-1,\cU_{a+1}^\perp/\cU_{a+1}) $ over $G(a+1,B_{n})$, where $B_{n}:=A_{a+1}+W_{n-a-1}$. The base locus of $\varphi$ is where either $\dim(U_n \cap W_{n-a-1}^\perp) \geq a+2$ or $\dim(U_n\cap W_{n-a-1}) > 0$. As in the proof of Theorem \ref{thm_min_rect}, one considers an isotropic flag and computes Schubert cycles to determine that the base locus of $\varphi$ has codimension $a+2$. 
By construction,
\(
\varphi^{*}\mathcal{O}_{IG(a+1,2a+2)}(1) \cong \mathcal{O}_{OG(n,V)}(1).
\)
Again, we have that the map $\varphi$ has no ramification divisor. We therefore conclude that $\sF$ has codimension $p$ and $c_1(\nF) = 2a$. The degree is $2a-\frac{a(a+1)}{2} -1 = -\frac{(a-1)(a-2)}{2}$.
\end{proof}

\begin{remark} \label{Remark deg 1 simpletic}
Theorem \ref{thm:deg-one} is false for $X = IG(n,2n)$. For $n\geq 3$, one has that $H^0(\Omega_X^3(3)) \neq 0$ and contains integrable forms as in the proof of Theorem \ref{thm:min-examples-orthosympl}. These foliations are given by rational maps $\varphi \colon X \dashrightarrow IG(2,4) \cong Q^3$, the tridimensional quadric. Therefore, one obtains codimension-one foliations of degree one on $IG(n,2n)$ of the form $\varphi^*\sG$, where $\sG$ is a foliation on $Q^3$ induced by an action of a noncommutative Lie algebra, see \cite[\S5]{LPT13}. These foliations do not fit in either item of Theorem \ref{thm:deg-one}, as shown in Theorem \ref{T: IG(n,2n) components} below. 

Corollary \ref{cor:nonvanishing} and Lemma \ref{lem:integ-stab} imply that integrable forms in $H^0(\Omega_X^3(3))$ define algebraically integrable foliations with rationally connected leaves. One then expects that these are precisely the foliations in the proof of Theorem \ref{thm:min-examples-orthosympl} for $p=3$. However, a proof or counterexample eludes us for the moment.
\end{remark}
 
\begin{theorem} \label{T: IG(n,2n) components}
For every $n\geq 2$,  $\Fol(IG(n,2n),1,1)$ has at least three irreducible components.
\end{theorem}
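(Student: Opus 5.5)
For $n=2$, $IG(2,4)\cong Q^3$ and the statement is the known description of $\Fol(Q^3,1,1)$ from \cite{LPT13}: three components, whose general members are a logarithmic foliation of type $(1,2)$, a pullback of a degree-one foliation on $\p2$ under a linear projection, and a foliation induced by an action of the noncommutative $2$-dimensional Lie algebra. So assume $n\geq 3$. I would exhibit three families of degree-one codimension-one foliations on $X=IG(n,2n)$, compute the dimension of each (or at least an invariant distinguishing their general members), and show that none lies in the closure of the others.

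\textbf{The three families.}
\begin{enumerate}
\item Logarithmic foliations of type $(1,2)$, given by $(f^2:g)$ with $f\in H^0(\ox(1))$, $g\in H^0(\ox(2))$. This family is irreducible, since it is the image of an open set of $H^0(\ox(1))\times H^0(\ox(2))$.
\item Pullbacks $\phi^*\sG$, where $\phi\colon X\dashrightarrow \p2$ is a linear projection and $\sG\in \Fol(\p2,1,1)$. This is the image of an irreducible parameter space.
\item Pullbacks $\varphi^*\sG$, where $\varphi\colon X\dashrightarrow IG(2,W^\perp/W)\cong Q^3$ is the map of Theorem \ref{thm:min-examples-orthosympl} with $a=2$, for $W\in IG(n-2,V)$, and $\sG$ runs over the Lie-algebra component of $\Fol(Q^3,1,1)$. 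This family is again irreducible.
\end{enumerate}

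\textbf{Degree and nondegeneracy.} First check that each family lies in $\Fol(X,1,1)$. For the third family, Theorem \ref{thm:min-examples-orthosympl} gives $\varphi^*\OO(1)=\ox(1)$ and shows that $\varphi$ has no ramification divisor. Hence $\varphi^*\sG$ has normal bundle $\ox(3)$, provided $\sG$ is transverse to $\varphi$, which holds for general $\sG$ by $G$-homogeneity.

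\textbf{Separating the families.} Fol is closed in the projective space, so irreducible closed subsets are closures of these families. To show the three closures are distinct components, it suffices to show that a general member of each family does not lie in the closure of either of the other two families, or in any other irreducible closed subset strictly containing it. I would use semicontinuous invariants.
\begin{itemize}
\item Family (1) foliations have a rational first integral, and a general leaf is a quadric section. Family (3) foliations are not algebraically integrable, since the general $\sG$ on $Q^3$ has transcendental leaves.
\item Distinguish (2) from (3) by the algebraic fibration they are tangent to, i.e.\ the rank and first Chern class of the maximal algebraically integrable subfoliation. Family (2) contains the fibers of a projection to $\p2$, of codimension $2$. In family (3), $\sG$ on $Q^3$ is induced by an action of a $2$-dimensional Lie algebra, and $\varphi$ has fibers of codimension $3$. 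So the algebraic part has codimension $3$ (if $\sG$ has no algebraic subfoliation) versus codimension $2$. Equivalently, one can compare the tangent sheaf: for (3) $\tF$ contains $\ker D\varphi$ with $c_1=n+1-3$, plus an extra $\ox\oplus\ox$-type piece coming from the Lie algebra.
\end{itemize}

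\textbf{Main obstacle.} Ruling out that family (3) lies in the closure of (1) or (2) seems hardest, since closure can degenerate invariants. My first approach would be a dimension and tangent-space count. Compute $\dim$ of family (3) as $\dim IG(n-2,2n)+\dim(\text{Lie component of }\Fol(Q^3,1,1))$, then show that at a general point the Zariski tangent space of $\Fol$ has this dimension. This would be done by restricting first-order deformations of $\omega$ to a general fiber/section. Failing that, I would use rigidity: a limit of family-(1) or family-(2) foliations is tangent to a codimension-$\le 2$ algebraic fibration by upper semicontinuity in the relative Chow/Hilbert scheme of leaves, whereas general members of (3) are not. In both approaches the transverse-reduction argument of \cite{LPT13} on $Q^3$ is pulled back via $\varphi$.
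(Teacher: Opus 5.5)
\textbf{There is a genuine gap in how you reach three components.} Showing that a general member of each family avoids the closures of the other two only gives three pairwise non-nested irreducible subsets of $\Fol(X,1,1)$. These could still lie in one or two larger irreducible components; for instance, $\overline{(1)}$ and $\overline{(2)}$ could sit in a common one. What you need is that the closures of (1) and (2) are themselves irreducible components, and your proposal has no argument for this. The clause ``or in any other irreducible closed subset strictly containing it'' just restates the goal. The Zariski tangent space count is proposed only for family (3), and it is not carried out.

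The paper gets exactly this missing input from the stability theorems for rational and logarithmic foliations \cite{Calvo1994,GMLN-rat}: $\Log(1,2)$ and $\Log(1,1,1)$ are irreducible components of $\Fol(IG(n,2n),1,1)$. Your family (2) is the closure of $\Log(1,1,1)$, since a general degree-one foliation on $\p2$ is logarithmic along three lines. With stability in hand, one only needs a single foliation outside $\overline{\Log(1,2)}\cup\overline{\Log(1,1,1)}$. That foliation then lies in a third component, and no dimension or tangent-space count for family (3) is needed.

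\textbf{Your proposed invariants do not handle the remaining step.} Algebraic integrability is not a closed condition, as you acknowledge, so it cannot keep $\varphi^*\sG$ out of $\overline{(1)}$. The ``codimension $3$ versus $2$'' distinction fails outright. The foliation $\sG$ is induced by $\mathfrak{aff}(\CC)=\langle X,Y\rangle$ with $[X,Y]=Y$, acting through $\mathfrak{so}_5$. Hence $Y$ is nilpotent, and its orbits are orbits of a unipotent one-parameter group, so they are rational curves. Thus $\sG$ has an algebraically integrable subfoliation by curves, and $\varphi^*\sG$ has a codimension-two algebraically integrable subfoliation, just like members of family (2).

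The paper instead uses the irreducible $\sG$-invariant cubic surface $S\subset Q^3$ from \cite[Example 5.1]{LPT13}, in two steps:
\begin{enumerate}
\item It shows that $\varphi^*S\in|\ox(3)|$ is irreducible. Suppose a component $A$ were not a pullback. Then $H^0(\Omega_X^4(3))=0$ and \cite[Lemma 3.1]{LPT13} give a logarithmic foliation with poles on $A\cup\varphi^*H$ to which $\varphi$ is tangent. By \cite[Lemma 2.4]{loray-pereira-touzet:trivial}, that foliation would be a pullback from $Q^3$, a contradiction.
\item It then runs the argument of \cite[Example 5.1]{LPT13} to exclude $\varphi^*\sG$ from the closures of both logarithmic components.
\end{enumerate}
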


\begin{proof}
Logarithmic foliations form two irreducible components 
\[
\Log(1,2) \cup \Log(1,1,1) \subset \Fol(IG(n,2n),1,1),
\]
see \cite{Calvo1994,GMLN-rat}. We only need to exhibit a foliation that is not in a logarithmic component. 

Let $\varphi \colon X \dashrightarrow  Q^3$ be as in Theorem \ref{thm:min-examples-orthosympl} and let $\sG$ be the foliation described in \cite[Example 5.1]{LPT13}. Then define $\sF = \varphi^*\sG$. The foliation $\sG$ leaves invariant an irreducible surface $S \in |\OO_{{Q}^3}(3)|$. It follows that $\varphi^*(S) \in |\OO_{X}(3)|$ is irreducible. Indeed, suppose that $\varphi^*(S) = A \cup B$ with $A\neq B$. Note that $A$ nor $B$ cannot be the pullback of a divisor on $Q^3$,  so we may assume that $A$ is not a pullback.   Then take $C = \varphi^*(H)$ for some general $H\in |\OO_{{Q}^3}(1)|$ and define $D = a\, A+ b\,C$ so that $c_1(D) = 0$. 

Since $D$ is $\varphi$-invariant and $H^0(\Omega_X^4(3)) = 0$, \cite[Lemma 3.1]{LPT13} implies that $\varphi$ is tangent to a codimension-one logarithmic foliation $\sH$ defined by a rational $1$-form $\eta$ with poles along $A \cup C$. In other words, the general fibers of $\varphi$ are tangent to $\sH$. Applying \cite[Lemma 2.4]{loray-pereira-touzet:trivial}, it follows that $\eta$ is the pullback via $\varphi$ of a rational $1$-form $\theta$. But this contradicts the choice of polar divisor of $\eta$ (not a pullback from $Q^3$). Thus $\varphi^*(S)$ must be irreducible. 
Finally, the same argument as in \cite[Example 5.1]{LPT13} shows that $\sF$ is not in the closure of the logarithmic components. 
\end{proof}

\subsection{Minimal-degree foliations on the exceptional varieties}
To conclude this section, we discuss a natural example of minimal-degree foliation on the Cayley plane $\OOP^2 = E_6/P_1 \subset \p{26}$. 

From Table \ref{tab:cohE6}, one notices that $H^0(\Omega_{\OOP^2}^8(8)) = V_{4\lambda_1}$ hence the minimal orbit is isomorphic to $\OOP^2$; and there is a way to associate a point of $\OOP^2$ to a codimension-8 foliation. The Cayley plane is covered by octonionic lines $\OOP^1$, which are isomorphic to the quadric $Q^8$. The family of octonionic lines is parametrized by the dual plane $\overline{\OOP^2} = E_6/P_6 \cong \OOP^2$. For $y\in \overline{\OOP^2}$ we denote by $Q_y\subset \OOP^2$ the corresponding octonionic line, and similarly for $x\in \OOP^2$ and $Q_x \in \overline{\OOP^2}$. The following properties hold, see \cite{im_cayley} and references therein.
\begin{enumerate}
    \item For any $x,x' \in \OOP^2$ such that the usual line through $x$ and $x'$ is not contained in $\OOP^2$ there exists a unique $y\in \overline{\OOP^2}$ such that $Q_y \ni x,x'$;
    \item If $y,y' \in \overline{\OOP}^2$ such that the the usual line through $y$ and $y'$ is not contained in $\overline{\OOP}^2$ then $Q_y \cap Q_{y'}$ is a unique reduced point.
\end{enumerate}
Given a point $x\in \OOP^2$ one obtains a rational map 
\[
\varphi \colon \mathbb{OP}^2 \dlra \mathbb{OP}^1\quad , \quad x'\mapsto y\in Q_x \cong \mathbb{OP}^1. 
\]

\begin{proposition}\label{prop:fol-Cayley}
The foliation $\sF$ given by the fibers of the map $\varphi$ above satisfies $c_1(\nF) = 8$. Hence $\sF$ is given by an integrable form in $H^0(\Omega_{\OOP^2}^8(8))$.
\end{proposition}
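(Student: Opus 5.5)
The plan is to apply Lemma \ref{lem:normal-fibration} to $\varphi\colon \OOP^2 \dashrightarrow Q_x \cong \OOP^1 \cong Q^8$. We take $Y = Q_x$, an $8$-dimensional quadric with $\Pic(Y)\cong \ZZ$ and $c_1(TY) = 8$. The leaves of $\sF$ are the fibers of $\varphi$. Since $\dim \OOP^2 = 16$ and $\dim Y = 8$, the foliation $\sF$ has codimension $8$. The lemma then gives $c_1(\nF) = c_1(TY) - \delta = 8-\delta$, provided we check that $\varphi^*\OO_Y(1) = \OO_{\OOP^2}(1)$.

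\textbf{Dominance and the pullback of $\OO(1)$.} First we make $\varphi$ explicit. For general $x'$, the line $xx'$ is not contained in $\OOP^2$, so by property (1) there is a unique $y$ with $Q_y \ni x,x'$; then $y\in Q_x$. The map is dominant, since every $y\in Q_x$ is the image of any general point of $Q_y$. Its fibers are $Q_y\setminus\{x\}$, which are $8$-dimensional and rationally connected.

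For $\varphi^*\OO_Y(1)$, we compute the degree of the pullback of a hyperplane section of $Q_x\subset \PP^9$. We restrict $\varphi$ to a general line $L\subset \OOP^2$, meaning a line of the embedding in $\p{26}$, and count the points $x'\in L$ for which $\varphi(x')$ lies in a fixed hyperplane section of $Q_x$. Equivalently, we identify $\varphi$ with a projection. The point $x$ determines, via the $E_6$-cubic form, a linear map $\p{26}\dashrightarrow \PP^9$: the projection from the tangent space $T_x\OOP^2$ to the span of $Q_x$. The linear span of $Q_y$ is $\PP^9$, and by property (2) the $Q_y$ through $x$ intersect only at $x$. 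Using the $\mathrm{Spin}_{10}$-structure of the stabilizer of $x$, we expect the projection from $\langle T_x\OOP^2\rangle$ (a $\p{16}$) to $\p{26-17}=\PP^9$ to contract exactly the $Q_y$ through $x$. If so, $\varphi$ is given by linear forms, $\varphi^*\OO_Y(1) = \OO_{\OOP^2}(1)$, and $c_1(\nF)=8-\delta$. This identification of $\varphi$ as a linear projection is the main technical step. To carry it out, we would use the $P_1$-decomposition $V_{\lambda_1} = \CC\oplus V_{16}\oplus V_{10}$ (the last piece being the vector representation of $\mathrm{Spin}_{10}$) and check equivariance.

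\textbf{Vanishing of $\delta$.} By Table \ref{tab:cohE6}, $l(8) = 8$, so $\min\{l \mid H^0(\Omega^8_{\OOP^2}(l))\neq 0\} = 8 = c_1(TY)$. The second part of Lemma \ref{lem:normal-fibration} then gives $\delta = 0$. Alternatively, the stabilizer of $x$ acts transitively on $Q_x$, so a ramification divisor, being invariant, would have to be a union of fibers pulled back from an invariant divisor of $Q_x$; no such divisor exists. Either way $c_1(\nF)=8$. The form $\varphi^*\eta$ is integrable, since it is a pullback, so $\sF$ corresponds to an integrable element of $H^0(\Omega^8_{\OOP^2}(8)) = V_{4\lambda_1}$. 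This matches the minimal orbit $\OOP^2$ via $x\mapsto \sF$, by equivariance.
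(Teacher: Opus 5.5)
Your overall frame is the paper's: apply Lemma~\ref{lem:normal-fibration} to $\varphi\colon\OOP^2\dashrightarrow Q_x\cong Q^8$ with $c_1(TQ_x)=8$, then kill $\delta$ using $l(8)=8$ from Table~\ref{tab:cohE6}. But the one substantive input is never established, namely the lemma's hypothesis $\varphi^*\OO_{Q_x}(1)=\OO_{\OOP^2}(1)$. You write that you \emph{expect} the projection from $\PP(\widehat{T_x\OOP^2})$ to contract the $Q_y$ through $x$, that \emph{if so} $\varphi$ is linear, and that you \emph{would} check equivariance. This step is where all the content of the proposition lies. If $\varphi^*\OO_{Q_x}(1)=\OO(k)$, the lemma only gives $c_1(\nF)=8k-\delta$, and $l(8)=8$ only gives the lower bound $c_1(\nF)\geq 8$; the upper bound is what is missing. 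Equivariance alone does not close it. It tells you that the image of the tangent projection $\pi$ is either the quadric or all of $\p{9}$. It says nothing directly about how $\pi$ relates to $\varphi$, or about degrees.

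The paper proves the missing step directly. Take $[y]\in Q_x$ and $Y=Q_x\cap\PP(y^{\perp_q})$. For $[z]\in Y$ one has $y\in z^{\perp_q}=\widehat{T_zQ_x}\subset\widehat{T_z\overline{\OOP^2}}$. Dualizing gives $Q_z=\PP(\widehat{T_z\overline{\OOP^2}}^\perp)\cap\OOP^2\subset\PP(y^\perp)$. Hence $\varphi^{-1}(Y)$ lies in the hyperplane section $\OOP^2\cap\PP(y^\perp)$, and equals it by dimension and irreducibility.

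Your own route can also be completed. Since $Q_y=\langle Q_y\rangle\cap\OOP^2$ contains $x$, one has $\langle Q_y\rangle\cap\PP(\widehat{T_x\OOP^2})\supseteq\PP(\widehat{T_xQ_y})$, which is a hyperplane of $\langle Q_y\rangle\cong\p{9}$. So $\pi$ collapses each $Q_y$, $y\in Q_x$, to a point, and $\pi=\psi\circ\varphi$. The map $\psi\colon Q_x\to\p{9}$ is equivariant under the stabilizer of $x$, which acts transitively on $Q_x$, so $\psi$ is a morphism. The base locus of $\pi$ on $\OOP^2$ is the cone of lines through $x$, of codimension $5$. Therefore $\OO_{\OOP^2}(1)=\pi^*\OO_{\p{9}}(1)=\varphi^*\psi^*\OO_{\p{9}}(1)$. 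Writing $\psi^*\OO_{\p{9}}(1)=\OO_{Q_x}(m)$ and $\varphi^*\OO_{Q_x}(1)=\OO_{\OOP^2}(k)$ with $m,k\geq 1$, this forces $mk=1$.

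A minor point: property (2) does not say that the $Q_y$ through $x$ meet only at $x$. For $q$-orthogonal $y,y'\in Q_x$ the line $yy'$ lies in $Q_x\subset\overline{\OOP^2}$, so (2) does not apply. You do not actually use this claim.
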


For simplicity, we denote $\varphi^{-1}(\,\cdot\,)$ the closure $\overline{\varphi|_U^{-1}(\,\cdot\,)}$ in $\OOP^2$, where $U\subset \OOP^2$ is the domain of definition of $\varphi$.

\begin{proof}
We will show that $\varphi^*\OO_{\OOP^1}(1) = \OO_{\OOP^2}(1)$. The result then follows from Lemma \ref{lem:normal-fibration}. Recall the explicit construction of $Q_x$. Consider the affine tangent space $\widehat{T_{x}\OOP^2} \subset \CC^{27}$; then (the projectivized of) its orthogonal in the dual projective space is a nine-dimensional projective space $\PP(\widehat{T_{x}\OOP^2}^\perp)$ that cuts out $\overline{\OOP^2}$ at $Q_x$. Now, consider a point $[y]\in Q_x$. Such a point defines a hyperplane $\PP(y^{\perp_q})$ in $\PP(\widehat{T_{x}\OOP^2}^\perp)$ (here $q$ is the quadratic form defining $Q_x$), hence a hyperplane section $Y$ in $Q_x$. We want to show that $\varphi^{-1}(Y)$ is $\PP(y^\perp)\cap \mathbb{OP}^2$, a hyperplane section in $\mathbb{OP}^2$. Since $\varphi^{-1}(Y)$ must contain a divisor, it is sufficient to show that $\varphi^{-1}(Y)\subset \PP(y^\perp)$. Let $[z]\in Y$, i.e. $z\perp_q y$, $[z]\in Q_x$. Since $z^{\perp_q}=\widehat{T_zQ_x}$, we deduce that $y\in \widehat{T_zQ_x}$. Moreover $Q_x\subset \overline{\mathbb{OP}^2}$ so that $y\in \widehat{T_zQ_x}\subset \widehat{ T_z\overline{\mathbb{OP}^2}}$. Taking orthogonals in the dual space the inclusions get reversed, so that 
\[
\PP\!\left(\widehat{ T_z\overline{\mathbb{OP}^2}}^\perp\right) \subset \PP\!\left(\widehat{T_zQ_x}^\perp\right)\subset \PP(y^\perp).
\]
This implies that 
\[
Q_z=\PP\!\left(\widehat{ T_z\overline{\mathbb{OP}^2}}^\perp\right) \cap \mathbb{OP}^2 \subset \PP(y^\perp)\cap \mathbb{OP}^2.
\]
Since the fiber $\varphi^{-1}([z])=Q_z$ and the above inclusions hold for any $[z]\in Y$, we deduce that $\varphi^{-1}(Y)\subset \PP(y^\perp)\cap \mathbb{OP}^2$, and hence they coincide by dimensional reasons and the irreducibility of $\PP(y^\perp)\cap \mathbb{OP}^2$. Therefore $\varphi^*\cO_{Q_x}(1)=\cO_{\mathbb{OP}^2}(1)$.   
\end{proof}

\bibliographystyle{alphaurl}
\bibliography{biblio}
\end{document}